\newcommand{\ilim}[1][]{\mathop{\varinjlim}\limits_{#1}}
\newcommand{\abs}[1]{\left|#1\right|}
\newcommand{\N}{\mathbb{N}}
\newcommand{\Z}{\mathbb{Z}}
\newcommand{\Q}{\mathbb{Q}}
\newcommand{\R}{\mathbb{R}}
\newcommand{\C}{\mathbb{C}}
\newcommand{\F}{\mathbb{F}}
\newcommand{\Hom}{\mathrm{Hom}}
\newcommand{\Ext}{\mathrm{Ext}}
\newcommand{\SL}{\mathrm{SL}}
\newcommand{\PSL}{\mathrm{PSL}}
\newcommand{\restr}[2]{{
\left.\kern-\nulldelimiterspace
#1
\vphantom{\big|}
\right|_{#2}
}}
\theoremstyle{plain}
\newtheorem{Theorem}{Theorem}[section]
\newtheorem{Proposition}[Theorem]{Proposition}
\newtheorem{Lemma}[Theorem]{Lemma}
\newtheorem{Corollary}[Theorem]{Corollary}
\theoremstyle{definition}
\newtheorem{Definition}[Theorem]{Definition}
\newtheorem{Example}[Theorem]{Example}
\newtheorem{Remark}[Theorem]{Remark}
\newcommand{\ra}{\rightarrow}
\begin{document}

\title{Dijkgraaf-Witten invariant in topological $K$-theory}
\author{
Koki Yanagida 
\thanks{
\text{Department of Mathematics, Tokyo Institute of Technology, 2-12-1 Ookayama, Meguro-ku, Tokyo 152-8551, JAPAN}
\text{\quad \quad e-mail:\texttt{yanagida.k.ab@m.titech.ac.jp}
}
}
} 
%\keywords{Dijkgraaf--Witten invariant; K-theory; Representation theory}

\sloppy

\date{\empty}

\maketitle

\begin{abstract}
Given a finite group $G$, we define a new invariant of odd-dimensional oriented closed manifolds and call it the KDW invariant.
This invariant is a Dijkgraaf--Witten invariant in terms of $K$-theory.
In this paper, we compute the invariant of the Brieskorn homology spheres with $G=\PSL_2(\F_p)$.
We should remark that, in this computational result, the fundamental groups of the Brieskorn homology spheres and $\PSL_2(\F_p)$ are not nilpotent.
\end{abstract}

\begin{center}
\normalsize
\baselineskip=11pt
{\bf Keywords} \\
Dijkgraaf--Witten invariant, $K$-theory, Representation theory \ \ \
\end{center}
\begin{center}
\normalsize
\baselineskip=11pt
{\bf Subject Code } \\
\ \ \ \ \ \ 55N15
% 55N20, 19A22, 20G05,
\ \ \
\end{center}
\large 

\section{Introduction}
For a finite group $G$, the Dijkgraaf--Witten (DW) invariant $\mathrm{DW}_G$ is a topological invariant of oriented closed $3$-manifolds defined by Dijkgraaf and Witten \cite{DijWit}.
Roughly speaking, the DW invariant, $\mathrm{DW}_G (M)$, of an oriented closed $3$-manifold $M$ is the formal sum of pushforwards $\varphi_*([M]) \in H_3(BG; \Z)$, where $\varphi$ ranges over all the homomorphisms $\pi_1 (M) \to G$.
Here, $[M] \in H_3(M,\Z)$ is an orientation class of $M$ and $BG$ is a classifying space of $G$.
In addition, Dijkgraaf and Witten have constructed a toy model-like $(2+1)$-TQFT derived from the DW invariant.
In other words, providing computational examples of the DW invariant leads to a deeper understanding of the TQFT.
Nevertheless, in the previous studies, there are few examples of computing the DW invariant unless $G$ is abelian or nilpotent.

As a generalization, we can define orientation classes of any generalized homology as analogies of the orientation class $[M]$ of ordinary homology (see, e.g., \cite{Rud}).
In particular, for the $K$-homology $K_*$, it is known that a spin$^c$ structure of an odd-dimensional oriented closed manifold $M$ defines an orientation class $[M]_K$ of $M$ in $K_* (M)$ (see \cite{HigRoe}). 
Thus,
thanks to these orientation classes,
it is possible to extend the definition of TQFTs based on ordinary homology to generalized homologies
under certain assumptions.
For TQFTs based on generalized homologies, see, e.g., \cite{Gom}.

This paper further studies the DW invariant in terms of $K$-theory; this form of the DW invariant is called the KDW invariant and denoted by $\mathrm{KDW}_G (M) \in \Z[K_1(BG)]$ (Definition \ref{oo1}).
Even though $K_1 (BG)$ has been determined in \cite{AtiSin,Eke}, 
it is difficult to precisely present the pushforwards of the orientation class $[M]_K$ via homomorphisms $\pi_1(M) \to G$;
thus the computation of the KDW invariant is often difficult.
To solve this difficulty, from \cite{Kna}, we focus on an embedding $\psi_G: K_1(BG) \otimes \Z[1/2] \hookrightarrow R(G) \otimes \Z_{(2)}/\Z$, and attempt to compute $\psi_G (\mathrm{KDW}_G(M))$.
Here, $R(G)$ is the complex representation ring.
In this paper, we call $\psi_G (\mathrm{KDW}_G(M))$ the $\alpha$-KDW invariant (Definition \ref{oo31}).
Through $\psi_G$, 
the computing of the KDW invariants without $2$-torsion follows from the computation of induced representations between the representation rings (see Remark \ref{REMori} for details).

This paper computes the $\alpha$-KDW invariant in the following three cases:
\begin{enumerate}[(i)]
\item The lens space $M=L^1(k;\ell)$ with $G=\Z/k$ (Example \ref{oo122}),
\item The lens space $M=L^1(k;\ell)$ with $G=\PSL_2(\F_p)$ (Proposition \ref{lenswithPSL}),
\item The Brieskorn homology sphere $M=\Sigma$ with $G=\PSL_2(\F_p)$ (Theorems \ref{withoutp} and \ref{withp}).
\end{enumerate}
We should remark that, in (iii), the fundamental group $\pi_1(M)$ and $G$ are not nilpotent.
Theorems \ref{withoutp} and \ref{withp} imply that, if $G$ is more complicated than $\PSL_2(\F_p)$ and nilpotent groups, then the computation of the DW invariants will be more difficult.

We now briefly explain procedures to obtain the results (i)-(iii): First, (i) follows from the resulting computation \cite{Kna} of the images of pushforward $\varphi_*([L^1(k;\ell)]_K) \in K_1(BG)$ for $\varphi : \pi_1(L^1(k;\ell)) \to G$ under $\psi_G$.
Next, we deduce (ii) from the naturality of $\psi_G$; for any injective group homomorphism $f:H \to G$, there exists $f_! : R(H) \to R(G)$ satisfying $\psi_G \circ f_* = f_! \circ \psi_{H}$.
Finally, to accomplish (iii), we focus on a bordism between $\Sigma$ and lens spaces from \cite{JonWes}.
Considering this bordism,  we can verify that the pushforward of $[\Sigma]_K$ is equal to the sum of the pushforwards of orientation classes of lens spaces.
Thus, as an application of (ii), classifying the conjugacy classes of $\Hom(\pi_1(\Sigma), \PSL_2(\F_p))$ is adequate to compute the $\alpha$-KDW invariant in (iii).

This paper is organized as follows:
Section $2$ defines the KDW invariant and the $\alpha$-KDW invariant, and presents the resulting computation of the $\alpha$-KDW invariants of lens spaces with finite cyclic groups.
In Section $3$, we provide the $\alpha$-KDW invariants of lens spaces with $\PSL_2(\F_p)$.
Section $4$ gives the computational results of the $\alpha$-KDW invariants of some Brieskorn homology spheres with $\PSL_2(\F_p)$ in Theorems \ref{withoutp} and \ref{withp}.
Section $5$ provides the proofs of the theorems stated in Section $4$.

\noindent 
\textbf{Conventional terminology.}
Throughout this paper, we fix an odd prime $p \in \N$ and denote by $\F_p$ the finite field of order $p$. 
Groups $G$ and $H$ are of finite order, and every tensor product $\otimes$ is over $\Z$. 
By $M$, we mean a closed connected manifold of dimension $d$ with orientation.
Let $I_2$ denote the identity matrix 
$ \begin{pmatrix}
1 & 0 \\
0 & 1 
\end{pmatrix} $.

\section{Preliminaries on $K$-orientations and $K_*(BG)$}\label{subsecr}

In this section, we review $K$-(co)homology and $K$-orientations from \cite{BauDou1,BauDou2,HigRoe,BauHigNig},
and define the DW invariant in terms of topological $K$-theory. 

We begin by recalling $K$-(co)homology. 
In general, given a compact Hausdorff space $X$, we can define the $K$-homology $K_*(X)$ and the $K$-cohomology $K^*(X)$. 
For example, $K^0(X)$ is the Grothendieck group of complex vector bundles on $X$,
and $K^{-n}(X)$ is defined to be $K^0( \Sigma^n X)$, where $\Sigma^n X$ is the $n$-fold suspension of $X$. 
On the other hand, we omit the definition of the $K$-homology. However, as seen in \cite{HigRoe},
if $X$ has a spin$^c$ structure $\mathfrak{s}$, there is a Poincar\'{e} duality 
\begin{equation}\label{eq1} \mathrm{PD}_{\mathfrak{s} } : K^*(X) \cong K_{d-*}(X). \end{equation}
Recall the Bott periodicity $K_{*+2}(X) \cong K_*(X) $ and $ K^{*+2}(X) \cong K^*(X) $. 
In consideration of this periodicity, we may focus only on the cases of $*=1$ and $*=0.$

Next, given a finite group $G$, we review the $K$-(co)homology of $G$.
Suppose a filtration of $G$-manifolds $ N_1 \subset N_2 \subset \cdots $ such that 
the actions on $N_*$ are smoothly free and the direct limit is contractible. 
Then, the $K$-homology, $K_*(BG)$, is defined to be the direct limit $\lim K_* (G \backslash N_n) $.
The definition is independent of the choices of the filtrations.
As is known (see, e.g., \cite{AtiSeg,Eke}), there is an additive isomorphism 
\begin{equation}\label{EQ:KbyR}
K_*(BG)\cong
\left\{
\begin{array}{ll}
\ilim \Hom_\Z (R(G)/I_G^n, \Z),& \text{if $*=0$,}\\
\ilim \Ext_\Z^1 (R(G)/I_G^n, \Z),& \text{if $*=1$.}\\
\end{array}
\right.
\end{equation}
Here, $R(G)$ is the complex representation ring, and $I_G \subset R(G)$ is the augmentation ideal.
Moreover, in \cite[Theorem 5.5.6]{Eke}, considering the linear isomorphism of $\C$-vector spaces between $R(G) \otimes \C$ and $\mathrm{class}(G)$, which is spanned by the irreducible characters of  $G$, we have
\begin{equation}\label{EQ:K(BG)}
K_*(BG)\cong 
\left\{
\begin{array}{ll}
\Z,& \text{if $*=0$,}\\
\prod_{p \in \mathcal{P}(G)} (\Z / p^\infty)^{r_G(p)},& \text{if $*=1$.}\\
\end{array}
\right.
\end{equation}
Here, $\mathcal{P}(G)$ is the set of primes that divide the order $|G|$, $r_G(p)$ is the number of the conjugacy classes consisting of elements whose orders are powers of $p$, and $ \Z / p^\infty$ is the Pr\"{u}fer $p$-group. 
In conclusion, by \eqref{EQ:K(BG)}, we should focus on the odd cases of $ \dim M$. 
\subsection{$K$-orientation and properties in three-dimensional cases}\label{inv2}
We review from \cite{Rud} orientations in homology theory. 
Let $E_*$ be any ring spectrum with associated cohomology and homology theory. 
For any point $x \in M$, suppose an open neighborhood $U_x \subset M$ with homeomorphism $U_x \cong \R^d.$
Then, the excision axiom yields the map
\[
h_x : E_*(M ) \ra E_*(M, M \setminus \{ x\} ) \cong E_*(U_x, U_x \setminus \{ x\} ) = E_*(S^d ).
\]
Then, {\it an orientation of $M$ on $E_*$} is defined to be a class $ [M]_E\in E_*(M)$ such that 
$h_x ([M]_E ) \in E_*(S^d )$ is a generator of $ E_*(S^d )$ for all $x \in M$. 

In addition, let us further consider the case of $E_*=K_*$, i.e., $E$ has the $K$-theory spectrum $KU$, and $(d -1 )/2 \in \Z$. 
Let $\eta: K_* \ra H_*$ be the edge homomorphism in the Atiyah-Hirzebruch homology spectral sequence. 
For example, $ \eta : K_1 (S^d )\ra H_d (S^d ;\Z) $ is an isomorphism. 
An orientation class $ [M]_K$ is said to be {\it on $ [M]$} if 
$[M]$ is equal to $ \eta \circ h_x ([M]_E ) \in H_d (S^d ;\Z)$ for any $x \in M.$
Let $\mathrm{KOri}([M])$ be the set of orientation classes on $[M]$. 

With the choice of an orientation class $[M]_K$, let us define the Dijkgraaf-Witten invariant in terms of $K$-theory as follows: 

\begin{Definition}\label{oo1}
For $[M]_K \in \mathrm{KOri}([M])$, we define {\it the $K$-theoretic Dijkgraaf-Witten invariant of $[M]_K$ with $G$} to be the formal sum
\[ \mathrm{KDW}_G([M]_K) = \sum_{ f \in \Hom(\pi_1(M) ,G) } 1_{\Z} (Bf \circ \iota_{\pi_1(M)})_* ([M]_K) \in \Z[ K_{* + \dim M} (BG)], \] 
where $\iota_{\pi_1(M)}$ is a classifying map $M \to B\pi_1 (M)$, and $Bf : B\pi_1 (M) \to BG$ is the continuous map induced from $f:\pi_1(M) \to G$.
For simplicity, we often write the {\it KDW invariant} for the invariant. 
\end{Definition}

Next, we focus on the case of $\dim M=3$.
Let us discuss the correspondence between spin$^c$ structures $\mathrm{Spin}^c (M)$ and $K$-orientation classes $\mathrm{KOri}([M])$ of $3$-manifolds. 
We can show the following proposition by a Mayer-Vietoris argument and the Chern classes.

\begin{Proposition}[\cite{KamSch,Mat}]\label{oo34}
Suppose $\dim M=3$.
Then, there is an isomorphism $K_1(M) \cong H_1 (M;\Z)\oplus H_3(M;\Z)$, 
and the set $\mathrm{KOri}([M])$ is bijective to $H_1 (M;\Z) $.
\end{Proposition}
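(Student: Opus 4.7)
The plan is to compute $K_1(M)$ through the Atiyah--Hirzebruch spectral sequence (AHSS) for $K$-homology, whose $E^2$-page is $E^2_{p,q} = H_p(M; K_q(\mathrm{pt}))$ with $K_q(\mathrm{pt}) = \Z$ for $q$ even and $0$ for $q$ odd. Since $\dim M = 3$, the only possibly nonzero entries in total degree $1$ are $E^2_{1,0} = H_1(M;\Z)$ and $E^2_{3,-2} = H_3(M;\Z)$.

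The next step is to show that all relevant differentials vanish. Dimension considerations eliminate every differential into or out of these two entries, except possibly $d_3 : E^3_{3,-2} \to E^3_{0,0}$. To dispose of it, I would invoke the fact that every orientable 3-manifold is $\mathrm{spin}^c$ and use the Poincar\'e duality \eqref{eq1} to identify $K_1(M) \cong K^2(M) \cong K^0(M)$; in the AHSS for $K^0(M)$ the only candidate $d_3$ is $\beta \circ \mathrm{Sq}^2 \circ r$ applied to $H^0(M;\Z)$, which vanishes for degree reasons since $\mathrm{Sq}^2$ kills classes of degree zero. Therefore the spectral sequence collapses and yields
\[
0 \longrightarrow H_1(M;\Z) \longrightarrow K_1(M) \longrightarrow H_3(M;\Z) \longrightarrow 0,
\]
which splits because $H_3(M;\Z) = \Z$ is free; this gives the direct-sum decomposition.

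For the second claim, under the splitting the edge homomorphism $\eta : K_1(M) \to H_3(M;\Z)$ is the projection onto the $H_3$-summand. By naturality applied to the excision map $h_x$, a class $[M]_K \in K_1(M)$ satisfies $\eta \circ h_x([M]_K) = [M] \in H_3(S^3;\Z)$ for every $x \in M$ precisely when $\eta([M]_K)$ equals the fundamental class of $M$; the $H_1(M;\Z)$-component is then arbitrary. Hence $\mathrm{KOri}([M])$ is a torsor over $H_1(M;\Z)$, and in particular in bijection with it.

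The main obstacle is the vanishing of $d_3$. The $\mathrm{spin}^c$/Poincar\'e-duality detour above is the cleanest; an alternative, more in the spirit of the Mayer--Vietoris remark in the statement, is to decompose $M$ by a Heegaard splitting and patch the $K$-theories of the two handlebodies (each homotopy equivalent to a wedge of circles), using the first Chern class to identify the contribution of line bundles to the $H^2$-summand of $K^0$.
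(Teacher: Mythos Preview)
Your argument is correct, and the paper does not actually give a proof: it only records the one-line hint ``a Mayer--Vietoris argument and the Chern classes'' together with the references \cite{KamSch,Mat}.  Your route via the Atiyah--Hirzebruch spectral sequence is therefore a genuinely different (and more systematic) approach; the alternative you mention at the end --- Heegaard-splitting $M$, computing $K^0$ on each handlebody, and using the first Chern class to identify the $H^2$-contribution --- is exactly what the paper's hint has in mind.  The AHSS argument has the advantage that it makes the filtration, the edge map $\eta$, and hence the identification of $\mathrm{KOri}([M])$ with a coset of $H_1(M;\Z)$ completely transparent; the Mayer--Vietoris/Chern-class argument is more explicit about \emph{which} $K$-classes correspond to which elements of $H^2(M;\Z)$, which is what feeds into Proposition~\ref{oo35}.

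One small point worth tightening: your detour through $K^0(M)$ via spin$^c$ Poincar\'e duality shows that $K_1(M)\cong H^0\oplus H^2$ as an abstract group, but it does not literally say that the \emph{homology} differential $d_3:E^3_{3,-2}\to E^3_{0,0}$ vanishes --- collapse of one AHSS does not formally imply collapse of the other.  Two cleaner ways to close this: (i) once you have invoked the spin$^c$ structure anyway, the resulting $K$-orientation hits $[M]$ under the edge map $\eta$, so $\eta$ is surjective and $E^\infty_{3,-2}=H_3(M;\Z)$, forcing $d_3=0$; or (ii) observe directly that $d_3$ in the $K$-homology AHSS, like its cohomological counterpart, has $2$-torsion image, so any map $\Z\to\Z$ it induces is zero.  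Either replaces the Poincar\'e-duality detour with a one-line argument.
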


Let us state Proposition \ref{oo35} below.
As is known, any spin$^c$-structure $ \mathfrak{s}$ defines the first Chern class $c_1(\mathfrak{s} ) \in H^2( M;\Z)$;
the correspondence $\mathfrak{s} \mapsto c_1(\mathfrak{s} )$ gives rise to a bijection between $\mathrm{Spin}^c(M)$ and $H^2( M;\Z) $.
Let $K^0(M) \ra H^2( M;\Z) $ be the homomorphism obtained by considering the first Chern-class. 
Moreover, if a spin$^c$-structure $ \mathfrak{s}_0$ is fixed, 
the composite $c_1 \circ (\mathrm{PD}_{\mathfrak{s}_0 })^{-1}$ yields a bijection between $\mathrm{KOri}([M])$ and $H^2( M;\Z) $.
In summary, 
\begin{Proposition}\label{oo35}
Suppose $\dim M=3$. Then, there are 1-1 correspondences among $\mathrm{KOri}([M])$, $ \mathrm{Spin}^c(M)$ and $ H_1( M;\Z) \cong H^2( M;\Z) $.
\end{Proposition}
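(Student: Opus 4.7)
The plan is to realize the three claimed $1$-$1$ correspondences as a chain
\[
\mathrm{KOri}([M])\ \longleftrightarrow\ H^2(M;\Z)\ \longleftrightarrow\ \mathrm{Spin}^c(M),\qquad H^2(M;\Z)\cong H_1(M;\Z),
\]
filling in the outline immediately preceding the proposition.

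First, I would handle $\mathrm{Spin}^c(M)\leftrightarrow H^2(M;\Z)$. Every closed oriented $3$-manifold is parallelizable, so $w_2(M)=0$ and $M$ admits a spin structure; viewing this as a spin$^c$ structure $\mathfrak{s}_0$ yields a basepoint with $c_1(\mathfrak{s}_0)=0$. Since $\mathrm{Spin}^c(M)$ is a torsor over $H^2(M;\Z)$ under twisting by complex line bundles, this basepoint identifies the torsor with $H^2(M;\Z)$, and $\mathfrak{s}\mapsto c_1(\mathfrak{s})$ records the identification.

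Second, I would prove $\mathrm{KOri}([M])\leftrightarrow H^2(M;\Z)$ via the composite $c_1\circ\mathrm{PD}_{\mathfrak{s}_0}^{-1}$. By Proposition \ref{oo34}, the edge homomorphism $\eta$ identifies $K_1(M)\cong H_1(M;\Z)\oplus H_3(M;\Z)$, and $\mathrm{KOri}([M])\subset K_1(M)$ is precisely the $H_1(M;\Z)$-torsor of classes whose $H_3$-component is the fundamental class $[M]$. Transporting this through the Poincar\'e duality $\mathrm{PD}_{\mathfrak{s}_0}:K^0(M)\cong K_1(M)$ of \eqref{eq1} realizes it as the virtual-rank-$1$ affine slice of $K^0(M)$, and the first Chern class $c_1:K^0(M)\to H^2(M;\Z)$ restricts to a bijection on this slice via the short exact sequence $0\to H^2(M;\Z)\to K^0(M)\to H^0(M;\Z)\to 0$ coming from the Atiyah-Hirzebruch filtration for the $3$-dimensional complex $M$.

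Third, the ordinary Poincar\'e duality for the closed oriented $3$-manifold $M$ supplies the concluding isomorphism $H^2(M;\Z)\cong H_1(M;\Z)$.

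The main obstacle is the compatibility required in step two: I must verify that $\mathrm{PD}_{\mathfrak{s}_0}$ intertwines the $K$-cohomological and $K$-homological edge homomorphisms, so that the $H_3$-summand of $K_1(M)$ truly corresponds under $\mathrm{PD}_{\mathfrak{s}_0}^{-1}$ to the rank slice in $K^0(M)$ rather than to some other affine translate. I would handle this either by invoking naturality of the Atiyah-Hirzebruch spectral sequences under cap product with the $K$-theoretic fundamental class, or by a local model computation on a small ball patched by a Mayer-Vietoris argument in the spirit of the proof of Proposition \ref{oo34} given in \cite{KamSch,Mat}.
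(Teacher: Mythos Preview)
Your proposal is correct and follows exactly the route the paper takes: the proposition is presented there merely as a summary of the paragraph preceding it, which uses $c_1$ for the bijection $\mathrm{Spin}^c(M)\to H^2(M;\Z)$ and the composite $c_1\circ(\mathrm{PD}_{\mathfrak{s}_0})^{-1}$ for the bijection $\mathrm{KOri}([M])\to H^2(M;\Z)$, together with Proposition~\ref{oo34}. Your write-up is in fact more detailed than the paper, which gives no justification beyond that outline and the citations to \cite{KamSch,Mat}.
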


\subsection{A homomorphism $\psi_G$ of Knapp}\label{inv.revised}
In general, it can be hard to deal with $K$-homology quantitatively.
We will give a modification of the KDW invariant from the viewpoint of Knapp \cite{Kna}.
This subsection contains nothing new. 
For any $g \in G$ and subgroup $H \subset G$, the conjugation $H \to g H g^{-1}$ canonically induces an isomorphism $R(H) \cong R(gHg^{-1})$;
under the isomorphism, we often identify $R(H)$ with $R(gHg^{-1})$.

Let us review the Knapp $G$-signature $\alpha^{\mathrm{rp}}_\Z$ in \cite{Kna,AtiSin}.
For a free $G$-manifold $N$ with an orientation, we canonically have a classifying map $\iota_G: N/G \ra BG $ from the $G$-principal bundle $N \ra N/G.$
Since the oriented bordism group with coefficients in $\Q/\Z$ of $BG$ is torsion, there exist $n_N \in \N$ and a free $G$-manifold $Y$ such that $\sharp^{n_N}N = \partial Y$. Then, as in \cite[(1.6)]{Kna}, we can define a class function $\alpha^{\mathrm{rp}}_{\Z}(\cdot,N)$ on $G$ as
\[
\alpha^{\mathrm{rp}}_{\Z}(g,N) = 
\left\{
\begin{array}{ll}
\frac{1}{n_N} {\rm Sign}(g,Y) , & \text{ if }g\neq 1, \\[9pt]
\frac{1}{n_N} {\rm Sign}(Y), & \text{ otherwise. }
\end{array}
\right.
\]
Here, ${\rm Sign}(g,Y)$ is the character function of the representation $ {\rm Sign}(G,Y)$ arising from the intersection form and the $G$-action on $H^*(Y;\Z)$; see \cite{AtiSin,Kna}.
Then, $\alpha^{\mathrm{rp}}_\Z (g,N)$ is known to be independent of the choice of $Y$ modulo $\Z$.
We denote by $\alpha^{\mathrm{rp}}_\Z(G,N) \in R(G)\otimes \Q/\Z$ the associated representation with the character $\alpha^{\mathrm{rp}}_{\Z}(\cdot,N)$.

Next, we now review an injective homomorphism
\[
\psi_G: K_1(BG) \otimes \Z[1/2] \longrightarrow R(G) \otimes \Z_{(2)}/\Z,
\]
defined in \cite{Kna}.
Let $ \Omega_*( X; \Q/\Z)$ be the oriented bordism group for a CW-complex $X$ in the coefficients $\Q/\Z$ (see \cite[p.106]{Kna} for the definition), $M$ be the quotient manifold $N/G$ of a free $G$-manifold of $N$, and $\iota_G : M \to BG$ be a classifying map for the $G$-principal bundle $N \to M$.
Then, as is known \cite[Proposition 1.9]{Kna}, the correspondence $(M , \iota_G : M \ra BG) \mapsto \alpha^{\mathrm{rp}}_\Z (G,N) $ gives rise to 
a homomorphism $\Omega_{\mathrm{odd}}(BG ; \Q/\Z ) \otimes \Z[1/2] \ra R(G) \otimes \Z_{(2)}/\Z $.
Here, $\Z_{(2)}$ denotes the localization of $\Z$ at $2$, and the quotient group $\Z_{(2)}/\Z$ is naturally isomorphic to $\Q/\Z \otimes \Z[1/2]$.
In addition, the Thom homomorphism $\tau: \Omega_{\rm odd}(BG; \Q/\Z ) \ra K_1(BG)$ is defined by
\[
\tau ( ( X, f ) ) = f_* [X]_s,
\]
where $[X]_s \in K_1(X)$ is an element defined in \cite[p.108]{Kna}.
Note that $[X]_s$ is an orientation class up to a factor $2^n$ if $X$ is $K$-orientable, and $\mathrm{Coker} (\tau)$ is a $2$-primary torsion group. 
Then, 
as in \cite{Kna},
$\alpha_\Z^{\mathrm{rp}}$ factors over $\mathrm{Im} (\mu_s \otimes \mathrm{Id}_{\Z[1/2]}) = K_1 (BG) \otimes \Z[1/2]$ to a map $K_1(BG) \otimes \Z[1/2] \ra R(G) \otimes \Z_{(2)}/\Z$. 
We will denote by $\psi_G$ the map $K_1(BG) \otimes \Z[1/2] \ra R(G) \otimes \Z_{(2)}/\Z$.

\begin{Definition}\label{oo31}
Suppose that $M$ is of odd dimension.
We define the formal sum
\[ \alpha \textrm{-KDW}_G(M) \coloneqq \sum_{ f \in \Hom(\pi_1(M) ,G) } 1_{\Z} \psi_G \circ (Bf \circ \iota_{\pi_1(M)})_* ([M]_s)\in \Z[ R(G) \otimes \Z_{(2)}/\Z ], \] 
where $[M]_s \in K_1(M)$ is the element defined in \cite[p.108]{Kna}.
We call the {\it $\alpha$-KDW invariant with $G$}. 
\end{Definition}
\begin{Remark}\label{REMori}
Since $\psi_G$ is injective modulo $2$-torsion \cite[Theorem 2.10]{Kna}, for an orientation class $[M]_K \in K_1 (M)$ satisfying $[M]_K \otimes 1 = [M]_s \otimes 1 \in K_1(M) \otimes \Z[1/2]$, we can identify $\textrm{KDW}_G([M]_K)$ with $\textrm{$\alpha$-KDW}_G(M)$ without considering $2$-torsion.
\end{Remark}

Next, we note a naturality of the above map $\psi_G $. 
Let $\varphi:H \ra G$ be a group homomorphism between two finite groups. 
The correspondence from a representation of $H$ to the induced representation of $G$ gives rise to an 
additive homomorphism $\varphi_!:R(H) \ra R(G) $. 
Then, 
\begin{equation}\label{n2}
\psi_G \circ B\varphi_* = \varphi_! \circ \psi_H
\end{equation}
holds; see \cite[Theorem 2.1]{Kna}.
In particular, if $H$ is a subgroup of $G$ and $\varphi$ is the inclusion, 
then $\varphi_!$ is equal to the induced representation $\mathrm{Ind}^G_H : R(H) \to R(G)$, which is an additive homomorphism of $\Z$-modules.
Here, for a representation $V$ of $H$, $\mathrm{Ind}^G_H V$ is defined by the representation $\C[G] \otimes_{\C[H]} V$ of $G$.
Note that $\mathrm{Ind}^G_H = \mathrm{Ind}^G_{gHg^{-1}}$ under the identification $R(H) = R(gHg^{-1})$.
Thus, if a homomorphism $\tilde{f}: \pi_1(M) \to G$ is decomposed as
\[
\raisebox{-0.5\height}{\includegraphics[scale=1]{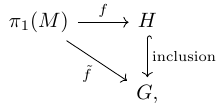}}
\]
then
\begin{equation}\label{EQ:Ind}
\psi_G \circ B\tilde{f}_* \circ \iota_{\pi_1(M)} = 
\mathrm{Ind}^G_H \circ (\psi_H \circ Bf_* \circ \iota_{\pi_1(M)})
\end{equation}
holds by the naturality (\ref{n2}).

Next, we consider the special case of $\pi_1(N)=1$, i.e., $\pi_1(M) = G$.
\begin{Proposition}\label{oo31}
Suppose that $N$ is a simply connected free $G$-manifold and $M=N/G$ is a $K$-oriented manifold of odd dimension.
Then,
\begin{equation}\label{DEF:alphaK}
\alpha \textrm{-}\mathrm{KDW}_G(M) = \sum_{ f \in \Hom(\pi_1(M) ,G) } 1_{\Z} (f_! (\alpha^{\rm rp}_\Z (G,N)) )\in \Z[ R(G) \otimes \Z_{(2)}/\Z ].
\end{equation}
\end{Proposition}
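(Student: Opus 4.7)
The plan is to reduce the statement to the definitional identity $\psi_G \circ \tau = \alpha^{\rm rp}_\Z$ together with the naturality \eqref{n2} of $\psi_G$ under group homomorphisms.

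First, I would observe that, since $N$ is simply connected and $N \to M$ is a $G$-principal bundle, we have a canonical identification $\pi_1(M) \cong G$, and the classifying map $\iota_{\pi_1(M)} : M \to B\pi_1(M)$ agrees (up to homotopy) with the classifying map $\iota_G : M \to BG$ of the bundle $N \to M$. Thus, for each $f \in \Hom(\pi_1(M),G) = \Hom(G,G)$, the map $Bf \circ \iota_{\pi_1(M)}$ is homotopic to $Bf \circ \iota_G$, and so
\[
(Bf \circ \iota_{\pi_1(M)})_*([M]_s) \;=\; Bf_* \circ (\iota_G)_*([M]_s) \quad \text{in } K_1(BG).
\]
Applying $\psi_G$ and using the naturality \eqref{n2} with respect to $f : G \to G$, we obtain
\[
\psi_G\bigl((Bf \circ \iota_{\pi_1(M)})_*([M]_s)\bigr) \;=\; f_!\bigl(\psi_G \circ (\iota_G)_*([M]_s)\bigr).
\]

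Second, I would identify the inner factor $\psi_G \circ (\iota_G)_*([M]_s)$ with $\alpha^{\rm rp}_\Z(G,N)$. By definition of the Thom homomorphism, $\tau((M, \iota_G)) = (\iota_G)_*([M]_s) \in K_1(BG)$. The homomorphism $\psi_G$ was constructed precisely so that the composite $\psi_G \circ (\tau \otimes \Id_{\Z[1/2]})$ on $\Omega_{\rm odd}(BG;\Q/\Z)\otimes\Z[1/2]$ coincides with the homomorphism induced by $\alpha^{\rm rp}_\Z$; hence
\[
\psi_G \circ (\iota_G)_*([M]_s) \;=\; \alpha^{\rm rp}_\Z(G,N) \quad \text{in } R(G) \otimes \Z_{(2)}/\Z.
\]
Combining this with the previous display yields
\[
\psi_G\bigl((Bf \circ \iota_{\pi_1(M)})_*([M]_s)\bigr) \;=\; f_!\bigl(\alpha^{\rm rp}_\Z(G,N)\bigr),
\]
and summing over $f \in \Hom(\pi_1(M),G)$ gives \eqref{DEF:alphaK}.

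The only delicate point I anticipate is the first step: verifying cleanly that, under the identification $\pi_1(M) \cong G$ supplied by the universal cover $N \to M$, the two classifying maps $\iota_{\pi_1(M)}$ and $\iota_G$ really do agree up to homotopy (and in particular that the element $[M]_s$ appearing in Definition \ref{oo31} is the one to which the Thom construction applies). Once this bookkeeping is in place, the proof is merely naturality plus the definitional relation $\psi_G \circ \tau = \alpha^{\rm rp}_\Z$.
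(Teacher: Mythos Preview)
Your proposal is correct and follows essentially the same approach as the paper. The paper packages the two steps you identify---naturality \eqref{n2} applied to $f:G\to G$, and the definitional identity $\psi_G\circ\tau=\alpha^{\rm rp}_\Z$---into a single commutative diagram, tracking the bordism class $[M,\mathrm{id}_M]$ through $\tau$ and $(\iota_{\pi_1(M)})_*$; your ``delicate point'' about identifying $\iota_{\pi_1(M)}$ with $\iota_G$ is exactly the implicit identification the paper uses when it equates $\alpha^{\rm rp}_\Z$ evaluated on $[M,\iota_{\pi_1(M)}]$ with $\alpha^{\rm rp}_\Z(G,N)$.
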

\begin{proof}
For any $f \in \Hom (\pi_1(M), G)$, the diagram
\[\raisebox{-0.5\height}{\includegraphics[scale=1]{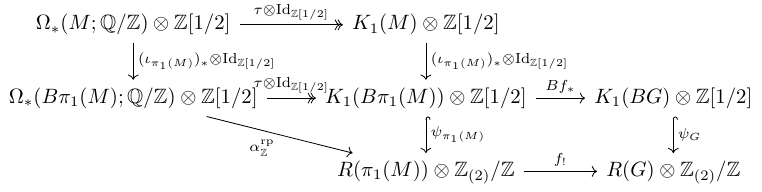}}\]
commutes.
Since $\tau([M,\mathrm{id}_M]) = [M]_s$ and $(\iota_{\pi_1 (M)})_*([M,\mathrm{id}_M]) = [M,\iota_{\pi_1 (M)}]$,
we have
\begin{equation}\label{EQ:psiG}
\psi_G \circ (Bf \circ \iota_{\pi_1(M)})_* ( [M]_s )= f_! \circ \alpha^{\mathrm{rp}}_\Z (G,N).
\end{equation}
We get (\ref{DEF:alphaK}) by applying (\ref{EQ:psiG}) to the definition of the $\alpha$-KDW invariant.
\end{proof}

\begin{Example}\label{oo122}
Let us compute the $\alpha$-KDW invariants of the lens spaces.

First, we will review the definition of the (generalized) lens space.
Let $G=\Z/k$ and $N=S^{2m+1}$ for some $k,m \in \N$.
Let $ \lambda \in \mathbb{C}$ be $e^{2 \pi \sqrt{-1} /k} $.
Fix positive integers $\ell_1, \dots, \ell_m$ such that $ k$ and each $\ell_i $ are relatively prime. 
Then, the cyclic group $\Z/k $ freely acts on the sphere 
\[
S^{2m+1} = \{ (z_0, \dots, z_m) \in \C^{m+1} \mid \abs{z_0}^2 + \abs{z_1}^2 + \cdots + \abs{z_m}^2 =1 \}
\]
by $
h \cdot (z_0, \dots, z_m) \coloneqq ( \lambda^{h} z_0, \lambda^{h\ell_1} z_n,\dots, \lambda^{h\ell_m} z_m)$, where $h \in \Z/k$. 
Then, the lens space $L^m (k; \ell_1 , \ldots , \ell_m)$ is defined to be the quotient space of $S^{2m+1}$ subject to $\Z/p$.
Hereafter, we denote by $M$ the lens space $L^m (k; \ell_1 , \ldots , \ell_m)$.
The loop $[0,1] \ra M$ defined by $t \mapsto ( \lambda^{ t },0,\dots, 0)$ gives a generator $\gamma$ of $\pi_1(M)$.
In this paper, we call this generator \emph{the canonical generator}.
We often identify $\pi_1 (M)$ with $\Z/k$ through the canonical generator.

Next, we will establish an additive isomorphism $\mathcal{T}$ between the representation ring $R(\Z/k)$ and $\Z^k$ as follows.
Let $\rho$ be the representation $ \Z/k \ra \mathrm{GL}_1 (\mathbb{C})$ that sends $1$ to $ \lambda$, which is irreducible.
Then, it is known (see, e.g., \cite{Seg}) that $R(\Z/k)$ is the same as the quotient ring $\Z[\rho]/(\rho^k - 1)$.
We define the isomorphism $\mathcal{T} : R(\Z/k) \to \Z^k$ by
\[
\mathcal{T}(\sum_{i=0}^{k-1} a_i \rho^{i}) = (a_0, \ldots , a_{k-1})^{\textrm{transpose}}.
\]

Finally, we give the $\alpha$-KDW invariants of $M$ with $\Z/k$.
For $h\in \Z/k \setminus \{ 0 \}$, let us define the homomorphism $f^{(h)} : \pi_1(M) \to \Z/k$ by $f^{(h)}(\gamma) = h$.
Note that $\Hom (\pi_1(M), \Z/k) = \{ e \} \sqcup \{ f^{(h)} \,\,|\,\, h \in \Z/k \setminus \{ 0 \} \}$, where $e$ means the trivial homomorphism.
In addition, for $h \in \Z/k \setminus \{ 0 \}$, let $P^{(h)}_k$ be the $(k \times k)$-permutation matrix whose $(i,j)$-entry is given by
\[
(\text{$P^{(h)}_k$'s $(i,j)$-entry}) =
\left\{
\begin{array}{ll}\displaystyle
1, & \text{ if $j \equiv h i \mod k$} , \\
0, & \text{ otherwise. }
\end{array}
\right.
\]
With the notation
\[
\boldsymbol{\xi}(k;\ell_1,\ldots,\ell_m) \coloneqq (\mathcal{T} \otimes \mathrm{Id}_{\Z_{(2)}/\Z}) (\alpha^{\mathrm{rp}}_{\Z}(\Z/k,S^{2m+1})),
\]
we have 
\begin{equation}\label{EQ:alphaLens}
(\mathcal{T} \otimes \mathrm{Id}_{\Z_{(2)}/\Z}) \circ f^{(h)}_! ( \alpha_\Z^{\mathrm{rp}} (\Z/k ; S^{2m+1} ))
= P^{(h)}_k \boldsymbol{\xi}(k;\ell_1,\ldots,\ell_m).
\end{equation}
Therefore, by Proposition \ref{oo31}, we have 
\[
\text{$\alpha$-KDW}_G (M) = 1_\Z \boldsymbol{o} + \sum_{h=1}^{k-1} 1_{\Z} (\mathcal{T} \otimes \mathrm{Id}_{\Z_{(2)}/\Z})^{-1} (P^{(h)}_k \boldsymbol{\xi}(k;\ell_1,\ldots,\ell_m) ).
\]
Here, $\boldsymbol{o} \in R(\Z/k) \otimes \Z_{(2)}/\Z$ denotes the additive zero.

Hereafter, we fix an isomorphism $\mathcal{T} \otimes \mathrm{Id}_{\Z_{(2)}/\Z}$, and identify $R(\Z/k) \otimes \Z_{(2)}/\Z$ with $(\Z_{(2)}/\Z)^k$.
Note that we consider $\Z^k \otimes \Z_{(2)}/\Z$ as $(\Z_{(2)}/\Z)^k$ under the correspondence 
\[
(a_0, \ldots , a_{k-1})^{\textrm{transpose}} \otimes b \longleftrightarrow (a_0 b, \ldots , a_{k-1} b)^{\textrm{transpose}}.
\]
In particular, from (\ref{EQ:alphaLens}), we also identify $P^{(h)}_k$ and $\boldsymbol{\xi}(k;\ell_1,\ldots,\ell_m)$ with $f^{(h)}_!$ and $\alpha_\Z^{\mathrm{rp}} (\Z/k ; S^{2m+1} )$, respectively.

As is known \cite[p.108]{Kna},
\[\displaystyle
\alpha^{\mathrm{rp}}_{\Z}(h,S^{2m+1}) = 
\left\{
\begin{array}{ll}\displaystyle
-\frac{\lambda^{h} +1}{\lambda^{h} -1} \prod_{i=1}^{m} \frac{\lambda^{h \ell_i} +1}{\lambda^{h \ell_i} -1}, & \text{ if }h\in \Z/k \setminus \{ 0 \}, \\
0, & \text{ otherwise. }
\end{array}
\right. 
\]
Since $\alpha^{\mathrm{rp}}_{\Z}(\cdot,S^{2m+1})$ is the character of the representation $\alpha^{\mathrm{rp}}_{\Z}(\Z/k,S^{2m+1})$, we can concretely compute $\alpha^{\mathrm{rp}}_{\Z}(\Z/k,S^{2m+1}) \in R(\Z/k) \otimes \Z_{(2)}/\Z$, in particular, and we can get $\boldsymbol{\xi}(k;\ell_1,\ldots,\ell_m) \in (\Z_{(2)}/\Z)^k$.

Let us consider the case of $m=1$.
Here, for $k\in\{ 3,5,7,11,13 \}$ and $1 \leq \ell \leq (k-1)/2$, we give a list of the resulting computation of $\boldsymbol{\xi}(k;\ell)$.
Since $\boldsymbol{\xi}(k;\ell) = -\boldsymbol{\xi}(k;-\ell')$ for any $\ell$ and $\ell'$ such that $\ell \equiv \ell' \mod k$, we can compute $\boldsymbol{\xi}(k;\ell)$ for any $k\in\{ 3,5,7,11,13 \}$ and $\ell\in\Z \setminus k\Z$ from Table \ref{TAB:xi}.
\begin{table}[bpth]
\center
\begin{tabular}{ll}
$(k;\ell)$& $\boldsymbol{\xi}(k;\ell)$ \\ \hline \hline
$(3;1)$ & $(2,8,8)/9$ \\ \hline
$(5;1)$ & $(4,1,2,2,1)/5$ \\ 
$(5;2)$ & $(0,1,4,4,1)/5$ \\ \hline
$(7;1)$ & $(3,5,4,0,0,4,5)/7$ \\ 
$(7;2)$ & $(2,3,6,4,4,6,3)/7$ \\ 
$(7;3)$ & $(5,1,3,4,4,3,1)/7$ \\ \hline
$(11;1)$ & $(8,10,5,4,7,3,3,7,4,5,10)/11$ \\ 
$(11;2)$ & $(10,0,3,8,4,2,2,4,8,3,0)/11$ \\ 
$(11;3)$ & $(6,3,5,1,2,8,8,2,1,5,3)/11$ \\ 
$(11;4)$ & $(6,1,8,5,3,2,2,3,5,8,1)/11$ \\
$(11;5)$ & $(1,8,7,9,3,0,0,3,9,7,8)/11$ \\\hline
$(13;1)$ & $(5,7,0,10,11,3,12,12,3,11,10,0,7)/13$ \\ 
$(13;2)$ & $(3,4,7,12,6,2,0,0,2,6,12,7,4)/13$ \\ 
$(13;3)$ & $(4,9,11,10,6,12,2,2,12,6,10,11,9)/13$ \\ 
$(13;4)$ & $(9,3,11,7,4,2,1,1,2,4,7,11,3)/13$ \\ 
$(13;5)$ & $(0,3,12,1,9,10,4,4,10,9,1,12,3)/13$ \\
$(13;6)$ & $(10,6,7,0,11,1,9,9,1,11,0,7,6)/13$ \\ 
\end{tabular}
\caption{$\boldsymbol{\xi}(k;\ell)$ for $k\in\{3,5,7,11,13\}$ and $1 \leq \ell \leq (k-1)/2$.\label{TAB:xi}}
\end{table}
\end{Example}

\section{$\alpha$-KDW invariant of lens spaces with  $\PSL_2(\F_p)$}\label{sec3}
In this section, we will compute the $\alpha$-KDW invariants of lens spaces $L(k;\ell)$ with $\PSL_2(\F_p)$. 
Since the fundamental groups of lens spaces  are cyclic groups, the images of non-trivial homomorphisms $\pi_1(L(k;\ell)) \to \PSL_2(\F_p)$ are cyclic subgroups.
Hence, based on (\ref{EQ:Ind}), 
it is sufficient to study the representations induced from cyclic subgroups of $\PSL_2(\F_p)$, 
which correspond to images of non-trivial homomorphisms from the fundamental group of a lens space.
Throughout this subsection, we mean by $G$ the projective special linear group $\PSL_2(\F_p)$.
Note that the order of $G$ is $(p^3-p)/2$.

\subsection{Group structure of $\PSL_2(\F_p)$}\label{pslk}
We first overview the group structure of $G$.
Let $\F_{p^2}$ be the finite field of order $p^2$.
Let $\mu_{p-1} = \F_p^\times$, and $\mu_{p+1}\subset \F_{p^2}$ be the cyclic subgroup consisting of $(p+1)$-th roots of unity in $\F_{p^2}$.
Fix a non-square number $\Delta \in \F_p$.
Then, with a choice of $\sqrt{\Delta} \in \F_{p^2}$, it follows that $\F_{p^2} = \F_p \oplus \F_p \sqrt{\Delta}$ as $\F_p$-modules, and $\mu_{p+1} = \left\{ x+y\sqrt{\Delta} \mid x^2 - \Delta y^2 = 1 \right\}$.
In this paper, we hereafter fix the arbitrary generators  $\zeta_- \in \mu_{p-1}$ and $ \zeta_+ \in \mu_{p+1}$.
Let us define respective homomorphisms $\sigma_{T},\sigma_{U}$, and $\sigma_{B}$ by
\begin{align*}
&\sigma_{T} : \mu_{p-1} \longrightarrow \PSL_2(\F_p);&&x \longmapsto
\begin{pmatrix}
x & 0 \\
0 & x^{-1} 
\end{pmatrix},\\[4pt]
&\sigma_{U} : \F_p \longrightarrow \PSL_2(\F_p);&&x \longmapsto
\begin{pmatrix}
1 & x \\
0 & 1
\end{pmatrix},\\[4pt]
&\sigma_{B} : \mu_{p+1} \longrightarrow \PSL_2(\F_p);&&x+\sqrt{\Delta} y \longmapsto
\begin{pmatrix}
x & \Delta y \\
y & x 
\end{pmatrix}.
\end{align*}
Furthermore, take three subgroups of the forms
\[
T\coloneqq\mathrm{Im} \sigma_{T},
\quad U\coloneqq\mathrm{Im}\sigma_{U},
\quad B\coloneqq\mathrm{Im} \sigma_{B}.
\]
The restriction maps $\sigma_{T} |_{\mu_{p-1} \setminus \{ 1 \}} $ and $\sigma_{B} |_{\mu_{p+1} \setminus \{ 1 \}} $ are $2$-$1$ maps satisfying $\sigma_{T}(\zeta_-^{i}) = \sigma_{T}(\zeta_-^{\frac{p-1}{2} - i})$ and $\sigma_{B}(\zeta_+^{i}) = \sigma_{B}(\zeta_+^{\frac{p+1}{2} - i})$, respectively.
Meanwhile, $\sigma_{U}$ is a bijection.
Therefore, we have isomorphisms
\[
T \cong \Z/((p-1)/2), \qquad U \cong \Z/p, \qquad B \cong \Z/((p+1)/2).
\]
As a result of \cite{Dor}, a complete set of class representatives for the conjugacy action in $G$ has been classified in Tables \ref{TAB:rep1} and \ref{TAB:rep2}.
Here, for $x \in G$, we denote by $\mathrm{Cl}_G(x)$ the conjugacy class of $x$ in $G$.\newline
\begin{table}[htpb]
\centering
\begin{tabular}{l||l|l|l|l|l|l}
Class rep. $G$ & $I_2$ & $\sigma_{U}(1)$ & $\sigma_{U}(\Delta)$ & $\sigma_{T}(\zeta_-^\ell)$ & $\sigma_{T}(\zeta_-^{\frac{p-1}{4}})$ & $\sigma_{B}(\zeta_+^m)$ \\\hline 
$|\mathrm{Cl}_G(x)|$ & $1$ & $\dfrac{p^2-1}{2}$ & $\dfrac{p^2-1}{2}$ & $p^2+p$ & $\dfrac{p^2+p}{2}$ & $p^2-p$ \rule[0mm]{0mm}{7mm} \\[8pt] \hline
Order of $x$ & $1$ & $p$ & $p$ & $\dfrac{p-1}{2((p-1)/2,\ell)}$ & $2$ & $\dfrac{p+1}{2((p+1)/2,m)}$ \rule[0mm]{0mm}{7mm} 
\end{tabular}
\caption{Class representatives of the conjugacy classes when $p \equiv 1 \text{ modulo $4$}$, for $1 \leq \ell \leq (p-1)/4 -1$ and $1 \leq m \leq (p-1)/4$.\label{TAB:rep1}}
\end{table}
\begin{table}[htpb]
\centering
\begin{tabular}{l||l|l|l|l|l|l}
Class rep. $G$ & $I_2$ & $\sigma_{U}(1)$ & $\sigma_{U}(\Delta)$ & $\sigma_{T}(\zeta_-^\ell)$ & $\sigma_{B}(\zeta_+^m)$ & $\sigma_{B}(\zeta_+^{\frac{p+1}{4}}) $ \\ \hline
$|\mathrm{Cl}_G(x)|$ & $1$ & $\dfrac{p^2-1}{2}$ & $\dfrac{p^2-1}{2}$ & $p^2+p$ & $p^2-p$ & $\dfrac{p^2-p}{2}$ \rule[0mm]{0mm}{7mm} \\[8pt] \hline
Order of $x$ & $1$ & $p$ & $p$ & $\dfrac{p-1}{2((p-1)/2,\ell)}$ & $\dfrac{p+1}{2((p+1)/2,m)}$ & $2$ \rule[0mm]{0mm}{7mm}
\end{tabular}
\caption{Class representatives of the conjugacy classes when $p \equiv -1 \text{ modulo $4$}$, for $1 \leq \ell \leq (p-3)/4$ and $1 \leq m \leq (p+1)/4-1$.\label{TAB:rep2}}
\end{table}

For $n \in \N$ and a prime $k$, let us define $\nu_k(n)$ by $\nu_k(n) \coloneqq \mathrm{max}\{ m \in \N \cup \{ 0\} \mid k^m \text{ divides } n \}$.
A careful verification with Tables \ref{TAB:rep1} and \ref{TAB:rep2} leads us to the following corollary:
\begin{Corollary}\label{BPSL}
Let $G=\PSL_2(\F_p)$ and $k \in \mathcal{P}(G)$. Then,
\[
r_{G} (k)=
\left\{
\begin{array}{ll}
\displaystyle
\frac{k^{\nu_k ((p-1)/2)} - 1}{2}, & \text{ if $k \notin \{ 2,p\}$ and $k | (p-1)$}, \\[10pt]
\displaystyle
\frac{k^{\nu_k ((p+1)/2)} - 1}{2}, & \text{ if $k \notin \{ 2,p\}$ and $k | (p+1)$}, \\[10pt]
\displaystyle
2^{\nu_2 ((p-1)/2)-1}, & \text{ if $k=2$ and $p \equiv 1\mod 4$}, \\[10pt]
\displaystyle
2^{\nu_2 ((p+1)/2)-1}, & \text{ if $k=2$ and $p \equiv -1\mod 4$}, \\[10pt]
\displaystyle
2, & \text{ if $k=p$}.
\end{array}
\right.
\]
\end{Corollary}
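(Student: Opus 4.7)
The plan is to organize the argument by locating every element of $k$-power order within the three standard cyclic subgroups $T$, $U$, $B$, and then counting the orbits of conjugation on each.

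First, I would note that Tables~\ref{TAB:rep1} and~\ref{TAB:rep2}, together with the orders listed there, pin down where each prime-power order element of $G$ lies up to conjugacy. For $k = p$, these are the two unipotent classes $\sigma_U(1)$ and $\sigma_U(\Delta)$, distinguished by the squareness of $\Delta$. For an odd prime $k \ne p$, they lie in $T$ if $k \mid (p-1)$ and in $B$ if $k \mid (p+1)$; these alternatives are disjoint because $\gcd(p-1, p+1) = 2$. For $k = 2$, they lie in $T$ when $p \equiv 1 \pmod 4$ (so that $(p+1)/2$ is odd) and in $B$ when $p \equiv -1 \pmod 4$. This reduces the count to orbits of $G$-conjugation on the cyclic groups $T \cong \Z/((p-1)/2)$, $B \cong \Z/((p+1)/2)$, and $U \cong \F_p$.

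Next, I would use that the normalizers $N_G(T)$ and $N_G(B)$ have Weyl quotients of order $2$ acting by inversion $x \mapsto x^{-1}$, while on $U$ the conjugation by $T$ amounts to scaling by $(\F_p^\times)^2$, which accounts for exactly the two unipotent classes. For an odd prime $k \ne p$ with $k \mid (p-1)$, no element of order $k^j$ with $j \geq 1$ is fixed by inversion, so the $\phi(k^j) = k^j - k^{j-1}$ elements of $T$ of exact order $k^j$ split into $(k^j - k^{j-1})/2$ orbits. Summing for $1 \leq j \leq \nu_k((p-1)/2)$ telescopes to the announced value $(k^{\nu_k((p-1)/2)} - 1)/2$; the case $k \mid (p+1)$ is identical with $B$ replacing $T$ and $(p+1)/2$ replacing $(p-1)/2$.

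For $k = 2$ the one order-$2$ element in the relevant torus is fixed by inversion and contributes a singleton orbit, while for $j \geq 2$ the order-$2^j$ elements contribute $2^{j-2}$ orbits each, so $1 + \sum_{j=2}^{\nu_2((p\mp 1)/2)} 2^{j-2}$ collapses to $2^{\nu_2((p \mp 1)/2) - 1}$, with the sign chosen according to $p \bmod 4$. The case $k = p$ records only that the Sylow $p$-subgroups of $G$ have order $p$, hence no element has order $p^2$, and precisely the two unipotent classes from Tables~\ref{TAB:rep1} and~\ref{TAB:rep2} remain. The main obstacle I anticipate is bookkeeping rather than substance: one must verify that each $k$-power-order element is conjugate into the expected torus exactly once, and separate out the Weyl-fixed order-$2$ element, which alone forces the change of formula in the $k = 2$ cases.
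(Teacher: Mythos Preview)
Your proposal is correct and follows essentially the same approach as the paper, which does not give a detailed argument but simply states that the corollary follows by ``careful verification with Tables~\ref{TAB:rep1} and~\ref{TAB:rep2}.'' Your write-up makes that verification explicit by interpreting the conjugacy structure on $T$, $B$, and $U$ via the inversion action of the Weyl quotient and then evaluating the resulting telescoping sums, which is exactly the bookkeeping the paper leaves to the reader.
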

From (\ref{EQ:K(BG)}), this corollary determines the structure of $K_1(BG)$. 

\subsection{Induced representations of $\PSL_2(\F_p)$ from some cyclic subgroups}\label{pslind}
Let $k$ be an odd prime factor of $|G|$, and $\ell$ be an integer coprime to $k$.
In this subsection, for $n\in\N$ and $i \in \Z/n$, we define a representation $\rho_i^{(n)}$ of $\Z/n$ by $\rho_i^{(n)} (j) = e^{2 \pi \sqrt{-1} ji/n}$.
In addition, $\chi^{(n)}_i$ denotes the character corresponding to $\rho^{(n)}_i$.
Considering $\sigma_{T}(\zeta_-)$ (resp. $\sigma_{U}(1), \sigma_{B}(\zeta_+)$) as a generator of $T$ (resp. $U, B$), we use the same letter $\rho_i^{(\frac{p-1}{2})}$ (resp. $\rho_i^{(p)}, \rho_i^{(\frac{p+1}{2})}$).
We note that, for any finite group $L$, the representation ring $R(L)$ is a free $\Z$-module spanned by the irreducible representations of $L$.

\begin{Lemma}\label{LEM:IndT}

\noindent
\rm{(i)}
If $p \equiv 1 \mod 4$, then
$\mathrm{Im}( \mathrm{Ind}^G_T ) \subset R(G)$ is a free submodule of rank $(p+3)/4$ generated by
\begin{equation}\label{genIndT}
\left\{
\mathrm{Ind}^G_T \rho^{(\frac{p-1}{2})}_i
\middle|
0 \leq i \leq (p-1)/4
\right\}.
\end{equation}

\noindent
\rm{(ii)}
If $p \equiv -1 \mod 4$, then 
$\mathrm{Im}( \mathrm{Ind}^G_T ) \subset R(G)$ is a free submodule of rank $(p+1)/4$ generated by
\begin{equation}\label{genIndTT}
\left\{
\mathrm{Ind}^G_T \rho^{(\frac{p-1}{2})}_i
\middle|
0 \leq i \leq (p-3)/4
\right\}.
\end{equation}

\noindent
Moreover, 
in both cases (i) and (ii), the following equality holds for any $i \in \Z/( (p-1)/2 )$:
\begin{equation}\label{EQ:LEM:IndT}
\mathrm{Ind}^G_T \rho^{(\frac{p-1}{2})}_i = \mathrm{Ind}^G_T \rho^{(\frac{p-1}{2})}_{ -i}.
\end{equation}
\end{Lemma}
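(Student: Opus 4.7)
The plan is to proceed in three steps: establish the symmetry \eqref{EQ:LEM:IndT} via conjugation by the Weyl element of $G$, use this to exhibit a generating set by counting orbits, and then verify linear independence via Mackey's formula.

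For \eqref{EQ:LEM:IndT}, I note that the Weyl element $w \in \PSL_2(\F_p)$, represented by $\bigl( \begin{smallmatrix} 0 & 1 \\ -1 & 0 \end{smallmatrix} \bigr)$, lies in $N_G(T) \setminus T$ and acts on $T$ by inversion, i.e., $w \sigma_T(x) w^{-1} = \sigma_T(x^{-1})$. Applying the general identity $\mathrm{Ind}^G_H V \cong \mathrm{Ind}^G_{gHg^{-1}} V^g$ with $g = w$ and $H = T$, the conjugated representation satisfies $\rho_i^w = \rho_{-i}$, whence $\mathrm{Ind}^G_T \rho_i^{((p-1)/2)} = \mathrm{Ind}^G_T \rho_{-i}^{((p-1)/2)}$.

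Since $\{\rho_i^{((p-1)/2)}\}_{i \in \Z/((p-1)/2)}$ is a $\Z$-basis of $R(T)$, the identity \eqref{EQ:LEM:IndT} implies that $\mathrm{Im}(\mathrm{Ind}^G_T)$ is generated by one $\mathrm{Ind}^G_T \rho_i^{((p-1)/2)}$ per orbit of the involution $i \mapsto -i$ on $\Z/((p-1)/2)$. When $p \equiv 1 \mod 4$, $(p-1)/2$ is even with two fixed points ($i = 0$ and $i = (p-1)/4$), yielding $(p+3)/4$ orbits; when $p \equiv -1 \mod 4$, $(p-1)/2$ is odd with the unique fixed point $i = 0$, yielding $(p+1)/4$ orbits. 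The sets \eqref{genIndT} and \eqref{genIndTT} realize exactly one representative from each orbit, so they generate $\mathrm{Im}(\mathrm{Ind}^G_T)$.

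It remains to prove linear independence of these generators; freeness of the image of the claimed rank then follows automatically since $R(G)$ is a free $\Z$-module. I would compute $\mathrm{Res}^G_T \mathrm{Ind}^G_T \rho_j^{((p-1)/2)}$ via Mackey's double-coset formula. The centralizer data in Tables \ref{TAB:rep1} and \ref{TAB:rep2}---namely, every regular $t \in T$ satisfies $C_G(t) = T$, while the order-$2$ element of $T$ (when $p \equiv 1 \mod 4$) has $C_G = N_G(T)$---imply that $T \cap gTg^{-1} = T$ exactly for $g \in N_G(T)$ (yielding the two double cosets $T$ and $Tw$) and is trivial otherwise. Hence
\[
\mathrm{Res}^G_T \mathrm{Ind}^G_T \rho_j^{((p-1)/2)} = \rho_j^{((p-1)/2)} + \rho_{-j}^{((p-1)/2)} + N \cdot \mathrm{reg}_T,
\]
for some $N \in \Z_{\geq 0}$ independent of $j$, where $\mathrm{reg}_T$ denotes the regular representation of $T$. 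Given a relation $\sum_o c_o\, \mathrm{Ind}^G_T \rho_o^{((p-1)/2)} = 0$ over orbit representatives $o$, restricting to $T$ and comparing coefficients of each basis vector $\rho_k^{((p-1)/2)}$ produces a small linear system in the $c_o$ whose unique solution is $c_o = 0$ for all $o$. The main obstacle is the Mackey bookkeeping---specifically, verifying the above dichotomy for $T \cap gTg^{-1}$---which ultimately reduces to the centralizer analysis from Tables \ref{TAB:rep1} and \ref{TAB:rep2}; the rest of the argument is routine linear algebra.
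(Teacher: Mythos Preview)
Your argument is correct and complete in outline. The symmetry via the Weyl element is immediate, the orbit count is accurate in both congruence cases, and the Mackey step goes through: the dichotomy $T\cap gTg^{-1}\in\{T,\{1\}\}$ follows because any two $G$-conjugate nontrivial elements of $T$ are already $N_G(T)$-conjugate (inverse to each other), which is exactly the content of the centralizer data in Tables~\ref{TAB:rep1} and~\ref{TAB:rep2}. The final linear system is indeed routine: summing the coefficients of all $\rho_k$ forces $\sum_o c_o=0$, after which each $c_o$ vanishes individually.

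The paper takes a different, more computational route. Rather than invoking conjugation by $w$ and Mackey's formula, it writes down the induced character $\mathrm{Ind}^G_T\chi_i$ explicitly on each conjugacy class, observing that the values are cosines and hence even in $i$, which gives \eqref{EQ:LEM:IndT}. For linear independence it evaluates a putative relation at the elements $\sigma_T(\zeta_-^{(p-1)/4-j})$ and at $I_2$, rewrites the resulting equations in terms of the vectors $\boldsymbol{w}_i=(e^{2\pi\sqrt{-1}\,2ji/(p-1)})_j$, and appeals to the linear independence of these root-of-unity vectors. Your approach is cleaner and more structural---it never touches an explicit character value and makes transparent why the rank equals the number of $\langle w\rangle$-orbits on $\widehat{T}$---while the paper's hands-on computation has the modest advantage of yielding the actual character table entries, which it does not otherwise need here but which mirror the calculations carried out for $U$ in Lemma~\ref{LEM:IndU}.
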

\begin{proof}
As is known, the correspondence from a representation to its character induces an isomorphism of $\C$-vector spaces $R(G) \otimes \C \to \mathrm{class}(G)$.
Therefore, it is sufficient to examine the induced character $\mathrm{Ind}^G_T \chi^{(\frac{p-1}{2})}_i$ in $\mathrm{class}(G)$, instead of $\mathrm{Ind}^G_T \rho^{(\frac{p-1}{2})}_i$ in $R(G)$.

(i) Suppose that $p \equiv 1 \mod 4$.
The conjugacy classes of $T$ under $\PSL_2(\F_p)$ are
\[
\{ I_2 \}, \,\,
\{\sigma_{T}(\zeta_-^{\frac{p-1}{4}}) \},\,\,
\{\sigma_{T}(\zeta_-^{\frac{p-1}{4} - 1}), \sigma_{T}(\zeta_-^{\frac{p-1}{4} + 1})\},\,\,
\{\sigma_{T}(\zeta_-^{\frac{p-1}{4} - 2}), \sigma_{T}(\zeta_-^{\frac{p-1}{4} + 2})\},\,\,
\ldots,
\{\sigma_{T}(\zeta_-), \sigma_{T}(\zeta_-^{\frac{p-3}{2}})\}.
\]
Hence, (\ref{EQ:LEM:IndT}) is obtained from
\begin{equation*}
(\mathrm{Ind}^G_T \chi^{(\frac{p-1}{2})}_i)(x)=
\left\{
\begin{array}{ll}
p(p+1), &\text{ if $x$ is conjugate to $I_2$,}\\
(-1)^i 2 \mathrm{cos}( 4 \pi \, \dfrac{ij}{p-1} ), &\text{ if $x$ is conjugate to $\sigma_T (\zeta_{-}^{\frac{p-1}{4} - j})$ for $j \in \{0, 1, \ldots, \frac{p-5}{4}\}$,}\\
0, &\text{ otherwise. }
\end{array}
\right.
\end{equation*}

Next, let us show that (\ref{genIndT}) is linearly independent over $\C$. Suppose that $a_1, a_2, \ldots, a_{(p-1)/4} \in \C$ satisfy
\begin{equation}\label{EQ:genIndT}
\sum_{i=0}^{(p-1)/4} (-1)^i a_i \mathrm{Ind}^G_T \chi^{(\frac{p-1}{2})}_i (x)= 0
\end{equation}
for any $x \in G$.
For $j \in \{ 0, 1,\ldots , (p-1)/2 \} \setminus\{ (p-1)/4 \}$, taking $ x = \sigma_{T}(\zeta_-^{\frac{p-1}{4} - j})$ in (\ref{EQ:genIndT}), we obtain
\begin{equation}\label{EQ:exponent}
\sum_{i=0}^{(p-1)/4} a_i e^{2 \pi \sqrt{-1} \frac{2ji}{p-1} } +
\sum_{i=(p-1)/4}^{(p-1)/2} a_{\frac{p-1}{2}-i} e^{2 \pi \sqrt{-1} \frac{2ji}{p-1} }
=0.
\end{equation}
In addition, taking $x = I_2$ in (\ref{EQ:genIndT}), we can also see that (\ref{EQ:exponent}) holds for $j=(p-1)/4$.
Now, let us denote by $\boldsymbol{w}_i$ a $( (p-1)/2 )$-dimensional vector whose $i$-th component is $e^{2 \pi \sqrt{-1} \frac{2ji}{p-1}}$.
Then, (\ref{EQ:exponent}) gives rise to
\[
2 a_0 \boldsymbol{w}_0 
+ \sum_{i=1}^{(p-5)/4} a_i \boldsymbol{w}_i 
+ 2 a _{\frac{p-1}{4}} \boldsymbol{w}_{\frac{p-1}{4}} 
+ \sum_{i=(p+3)/4}^{(p-3)/2} a_{\frac{p-1}{2}-i} \boldsymbol{w}_i = \boldsymbol{o}.
\]
Since $\{ \boldsymbol{w}_0, \boldsymbol{w}_1, \ldots , \boldsymbol{w}_{(p-3)/2}\}$ is linearly independent over $\C$, 
all $a_i$ are zero.
Therefore, (\ref{genIndT}) is linearly independent.

(ii) Suppose $p \equiv -1 \mod 4$.
The conjugacy classes of $T$ under $\PSL_2(\F_p)$ are
\[
\{ I_2 \}, \,\,
\{\sigma_{T}(\zeta_-^{\frac{p-1}{4} - \frac{1}{2}}), \sigma_{T}(\zeta_-^{\frac{p-1}{4} + \frac{1}{2}})\}, \,\,
\{\sigma_{T}(\zeta_-^{\frac{p-1}{4} - \frac{3}{2}}), \sigma_{T}(\zeta_-^{\frac{p-1}{4} + \frac{3}{2}})\},
\ldots,
\{\sigma_{T}(\zeta_-), \sigma_{T}(\zeta_-^{\frac{p-3}{2}})\}.
\]
Therefore, (\ref{EQ:LEM:IndT}) follows from
\begin{equation*}
(\mathrm{Ind}^G_T \chi^{(\frac{p-1}{2})}_i)(x)=
\left\{
\begin{array}{ll}
p(p+1), &\text{ if $x=I_2$,}\\
(-1)^i 2 \mathrm{cos}( 2 \pi \, \dfrac{(2j-1)i}{p-1} ), &\text{ if $x=\sigma_{T}(v^{\frac{p-1}{4} - \frac{2j-1}{2}})$ and $j \in \{1, 2, \ldots, \frac{p-3}{4}\}$,}\\
0, &\text{ otherwise. }
\end{array}
\right.
\end{equation*}
Moreover, the linearly independence of (\ref{genIndTT}) follows in a similar fashion to (i).
\end{proof}

Regarding $B$ and $U$, similar lemmas can be stated as follows.
\begin{Lemma}\label{LEM:IndB}
\rm{(i)}
If $p \equiv 1 \mod 4$, then
$\mathrm{Im}( \mathrm{Ind}^G_{B} ) \subset R(G)$ is a free submodule of rank $(p+3)/4$ generated by
\[
\left\{
\mathrm{Ind}^G_{B} \rho^{(\frac{p+1}{2})}_i
\middle|
0 \leq i \leq (p-1)/4
\right\}.
\]

\noindent
\rm{(ii)}If $p \equiv -1 \mod 4$, then
$\mathrm{Im}( \mathrm{Ind}^G_{B} ) \subset R(G)$ is a free submodule of rank $(p+5)/4$ generated by
\[
\left\{
\mathrm{Ind}^G_{B} \rho^{(\frac{p+1}{2})}_i
\middle|
0 \leq i \leq (p+1)/4
\right\}.
\]

Moreover, 
in both cases (i) and (ii), the following equality holds for any $i \in \Z/((p+1)/2)$: 
\begin{equation*}
\mathrm{Ind}^G_{B} \rho^{(\frac{p+1}{2})}_i = \mathrm{Ind}^G_{B} \rho^{(\frac{p+1}{2})}_{-i}.
\end{equation*}
\end{Lemma}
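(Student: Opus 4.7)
The plan is to mirror the character-theoretic argument used for Lemma \ref{LEM:IndT}, working through the $\C$-linear isomorphism $R(G) \otimes \C \cong \mathrm{class}(G)$ and computing $\mathrm{Ind}^G_B \chi^{((p+1)/2)}_i$ on the $G$-conjugacy class representatives. The first step is to determine the $G$-classes meeting $B$. A standard check shows that $G$-conjugation acts on $B \cong \Z/((p+1)/2)$ by inversion $h \mapsto h^{-1}$ (coming from a Weyl element normalizing the non-split torus). Hence the $G$-classes meeting $B$ are: the trivial class $\{I_2\}$, together with, in case (i), the $(p-1)/4$ pairs $\{\sigma_B(\zeta_+^m), \sigma_B(\zeta_+^{-m})\}$ for $1 \le m \le (p-1)/4$; and in case (ii), the $((p+1)/4 - 1)$ pairs for $1 \le m \le (p+1)/4 - 1$ plus the singleton $\{\sigma_B(\zeta_+^{(p+1)/4})\}$, the unique order-$2$ element, which is self-inverse. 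Counting gives $(p+3)/4$ and $(p+5)/4$ classes respectively, matching the asserted ranks.

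Applying the Frobenius induction formula, I would then compute
\[
(\mathrm{Ind}^G_B \chi^{((p+1)/2)}_i)(x) = \begin{cases} p(p-1), & x = I_2, \\ 2\cos\!\left( \dfrac{4\pi m i}{p+1} \right), & x \sim_G \sigma_B(\zeta_+^m),\ 2m \not\equiv 0 \bmod (p+1)/2, \\ 2 (-1)^i, & x \sim_G \sigma_B(\zeta_+^{(p+1)/4})\ \text{(case (ii) only)}, \\ 0, & \text{otherwise}. \end{cases}
\]
Since cosine is even and $(-1)^i = (-1)^{-i}$, this immediately yields $\mathrm{Ind}^G_B \rho^{((p+1)/2)}_i = \mathrm{Ind}^G_B \rho^{((p+1)/2)}_{-i}$ for every $i$.

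For the linear independence statement, I would assume $\sum_i a_i \mathrm{Ind}^G_B \chi^{((p+1)/2)}_i = 0$ with $i$ running over the proposed index set. Evaluating at each class representative $\sigma_B(\zeta_+^m)$ (and at $I_2$) produces a system in the exponentials $e^{4\pi\sqrt{-1} m i/(p+1)}$, which I would reorganize as a single linear identity of the form $\sum_i b_i \boldsymbol{w}_i = \boldsymbol{o}$, where $\boldsymbol{w}_i$ is the vector with $m$-th entry $e^{4\pi\sqrt{-1} m i/(p+1)}$. A Vandermonde argument, exactly as in the proof of Lemma \ref{LEM:IndT}, shows that the $\boldsymbol{w}_i$ are linearly independent over $\C$, forcing every $a_i$ to vanish.

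The main subtlety lies in case (ii): the additional generator $\mathrm{Ind}^G_B \rho^{((p+1)/2)}_{(p+1)/4}$ must be detected in the basis, and it is pinned down by its value $2(-1)^{(p+1)/4}$ on the singleton self-inverse class $\sigma_B(\zeta_+^{(p+1)/4})$. This boundary case must be bookkept separately in the Vandermonde step (in analogy with the middle element $\sigma_T(\zeta_-^{(p-1)/4})$ in case (i) of Lemma \ref{LEM:IndT}). Once this endpoint is handled consistently, the proof closes routinely.
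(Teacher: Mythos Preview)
Your proposal is correct and follows exactly the approach the paper indicates: the paper's proof of Lemma \ref{LEM:IndB} consists solely of the sentence ``Lemma \ref{LEM:IndB} can be shown in a similar fashion to the proof of Lemma \ref{LEM:IndT},'' and you have carried out precisely that parallel argument, correctly noting that the roles of cases (i) and (ii) are swapped relative to $T$ (the self-inverse middle element $\sigma_B(\zeta_+^{(p+1)/4})$ appears when $p\equiv -1\bmod 4$, whereas for $T$ it was $\sigma_T(\zeta_-^{(p-1)/4})$ in the $p\equiv 1\bmod 4$ case). Your computation of the induced character values, the deduction of the symmetry $\mathrm{Ind}^G_B\rho_i=\mathrm{Ind}^G_B\rho_{-i}$, and the Vandermonde-style linear independence step all match the template of Lemma \ref{LEM:IndT}.
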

Lemma \ref{LEM:IndB} can be shown in a similar fashion to the proof of Lemma \ref{LEM:IndT}.

\begin{Lemma}\label{LEM:IndU}
Let $Q_p \subset \F_p^\times$ be the subgroup consisting of squares in $\F_p^\times$.
Then, $\mathrm{Im}( \mathrm{Ind}^G_{U} )$ is a free submodule of rank $3$ generated by 
$
\{ \mathrm{Ind}^G_U \chi^{(p)}_0, \mathrm{Ind}^G_U \chi^{(p)}_1, \mathrm{Ind}^G_U \chi^{(p)}_\Delta \}.
$
Furthermore, 
for distinct $i, j\in \F_p^\times$, 
$\mathrm{Ind}^G_U \chi^{(p)}_i = \mathrm{Ind}^G_U \chi^{(p)}_j$ 
if and only if 
either $\{ i, j\} \subset Q_p$ or $\{ i, j\} \subset \F_p^{\times} \setminus Q_p$.
\end{Lemma}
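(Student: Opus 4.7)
My plan is to follow the strategy used in the proofs of Lemmas \ref{LEM:IndT} and \ref{LEM:IndB}: compute the induced character $\mathrm{Ind}^G_U \chi^{(p)}_i$ explicitly as a class function on $G$ and read off the lemma from its values.

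The first observation is that every non-identity element of $U$ has order $p$, so by Tables \ref{TAB:rep1} and \ref{TAB:rep2} the only $G$-conjugacy classes meeting $U$ are $\{I_2\}$, $\mathrm{Cl}_G(\sigma_U(1))$, and $\mathrm{Cl}_G(\sigma_U(\Delta))$. A direct computation gives $\sigma_T(c)\sigma_U(a)\sigma_T(c)^{-1} = \sigma_U(c^2 a)$, so $\sigma_U(a)$ and $\sigma_U(b)$ are $G$-conjugate if and only if $b/a \in Q_p$, and $U \setminus \{I_2\}$ splits under $G$-conjugation into exactly the squares and the non-squares. Consequently every character $\mathrm{Ind}^G_U \chi^{(p)}_i$ is supported on at most three conjugacy classes, so $\mathrm{Im}(\mathrm{Ind}^G_U)$ has rank at most $3$ a priori. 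Applying the Frobenius formula and using $|C_G(\sigma_U(a))| = p$ (from Tables \ref{TAB:rep1} and \ref{TAB:rep2}), the induced character takes the form
\[
(\mathrm{Ind}^G_U \chi^{(p)}_i)(\sigma_U(a)) = \sum_{b \in a Q_p} e^{2\pi\sqrt{-1}\, ib/p}, \qquad (\mathrm{Ind}^G_U \chi^{(p)}_i)(I_2) = \frac{p^2 - 1}{2}.
\]

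Writing $\tau_Q(i) = \sum_{b \in Q_p} e^{2\pi\sqrt{-1}\, ib/p}$ and $\tau_N(i) = \sum_{b \in \F_p^\times \setminus Q_p} e^{2\pi\sqrt{-1}\, ib/p}$, the substitution $b \mapsto c^{-1} b$ yields $\tau_Q(ci) = \tau_Q(i)$ for $c \in Q_p$ and $\tau_Q(ci) = \tau_N(i)$ otherwise; this immediately proves the ``if'' direction of the biconditional. The ``only if'' direction reduces to $\tau_Q(1) \neq \tau_N(1)$, which follows from the classical identities $\tau_Q(1) + \tau_N(1) = -1$ and $\tau_Q(1) - \tau_N(1) = \sum_{b \in \F_p^\times} \left(\frac{b}{p}\right) e^{2\pi\sqrt{-1} b/p}$, the latter being the non-zero quadratic Gauss sum. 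Finally, to establish the rank-$3$ statement I would evaluate the three candidates $\mathrm{Ind}^G_U \chi^{(p)}_0$, $\mathrm{Ind}^G_U \chi^{(p)}_1$, $\mathrm{Ind}^G_U \chi^{(p)}_\Delta$ at $(I_2, \sigma_U(1), \sigma_U(\Delta))$, obtaining a $3 \times 3$ matrix whose determinant is a non-zero scalar multiple of $\tau_Q(1) - \tau_N(1)$; hence these three characters are linearly independent and span the image.

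The only non-routine ingredient is the non-vanishing of the quadratic Gauss sum; once the conjugation identity $\sigma_T(c)\sigma_U(a)\sigma_T(c)^{-1} = \sigma_U(c^2 a)$ is in hand, everything else is bookkeeping, so I expect no serious obstacle.
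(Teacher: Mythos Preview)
Your proposal is correct and follows essentially the same route as the paper: both compute $\mathrm{Ind}^G_U\chi^{(p)}_i$ explicitly as a class function supported on $\{I_2\}\cup\mathrm{Cl}_G(\sigma_U(1))\cup\mathrm{Cl}_G(\sigma_U(\Delta))$ and read off the lemma from the resulting values. The only cosmetic difference is that the paper writes the half--Gauss sums in closed form as $(-1\pm\sqrt{\varepsilon p})/2$ with $\varepsilon=(-1)^{(p-1)/2}$, whereas you keep them as $\tau_Q(1),\tau_N(1)$ and invoke the non-vanishing of the quadratic Gauss sum; these are the same fact.
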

\begin{proof}
Let $s_0$ be a generator of $Q_p$ and $\varepsilon = (-1)^{(p-1)/2}$.
The conjugacy classes of $U$ under $\PSL_2(\F_p)$ are
\[
\{I_2\}, \quad 
\{ \sigma_{U}(s_0^j) \mid 0 \leq j \leq \frac{p-3}{2}\}, \quad 
\{ \sigma_{U}(s_0^j \Delta) \mid 0 \leq j \leq \frac{p-3}{2}\}.
\]
Thus, for $i\in \Z/p$, we obtain
\begin{equation*}
(\mathrm{Ind}^G_U \chi^{(p)}_i)(I_2)
= \dfrac{p^2-1}{2},\\[8pt]
\end{equation*}
\begin{equation*}
(\mathrm{Ind}^G_U \chi^{(p)}_i)(\sigma_{U}(s_0))
= \left\{
\begin{array}{ll}
\frac{p-1}{2}, & \text{ if }i = 0, \\[8pt]
\frac{-1+\sqrt{\varepsilon p}}{2}, & \text{ if }i \in Q_p, \\[8pt]
\frac{-1-\sqrt{\varepsilon p}}{2}, & \text{ otherwise, }
\end{array}
\right.\\[8pt]
(\mathrm{Ind}^G_U \chi^{(p)}_i)(\sigma_{U}(s_0\Delta))
= \left\{
\begin{array}{ll}
\frac{p-1}{2}, & \text{ if }i = 0, \\[8pt]
\frac{-1-\sqrt{\varepsilon p}}{2}, & \text{ if }i \in Q_p, \\[8pt]
\frac{-1+\sqrt{\varepsilon p}}{2}, & \text{ otherwise, }
\end{array}
\right.
\end{equation*}
and otherwise $\mathrm{Ind}^G_U \chi^{(p)}_i$ is $0$.
A careful verification of these computations allows us to deduce the claims  of Lemma \ref{LEM:IndU}. 
\end{proof}

\subsection{Result; $\alpha$-KDW invariants of lens spaces with $\PSL_2(\F_p)$}\label{PSLlens}
In this subsection, we will describe the $\alpha$-KDW invariants of $M=L^1(k;\ell)$ as the image of a certain homomorphism from the $\alpha$-KDW invariants with $\Z/k$. 
Hereafter, $k$ is an odd prime and $\ell \in \N$ is coprime to $k$.

Let us prepare the cyclic subgroup $H_k \subset G$ of order $k$ by 
\begin{equation}\label{cyc}
H_k\coloneqq
\left\{
\begin{split}
&\langle \sigma_{T}( \zeta_-^{\frac{p-1}{2 k}} ) \rangle, &&\text{ if $k | (p-1)$,}\\
&\langle \sigma_{U}( 1 ) \rangle, &&\text{ if $k=p$,}\\
&\langle \sigma_{B}( \zeta_+^{\frac{p+1}{2 k}} ) \rangle, &&\text{ if $k | (p+1)$.}\\
\end{split}
\right.
\end{equation}
Hereafter, we consider $H_k$ to be $\Z/k$ with the generator $\sigma_{T}(\zeta_-^{\frac{p-1}{2k}})$, $\sigma_{U}(1)$, or $\sigma_{B}(\zeta_+^{\frac{p+1}{2k}})$.
These generators are referred to as \emph{canonical generators}.
Note that $H_k$ includes all the representatives of order $k$ appearing in Tables \ref{TAB:rep1} and \ref{TAB:rep2}.

For a non-trivial homomorphism $\tilde{\varphi} : \pi_1(M) \to G$,
we will show that $\psi_G \circ B\tilde{\varphi}_* \circ \iota_{\pi_1 (M)} ( [M]_s )$ is equal to the image of the $G$-signature of a lens space under the $\mathrm{Ind}_{H_k}^G$.
The image $\tilde{\varphi}(\gamma)$ is conjugate to one of the representatives in Tables \ref{TAB:rep1} and \ref{TAB:rep2}, where $\gamma \in \pi_1(M)$ is the canonical generator in Example \ref{oo122}.
Since $\pi_1(M) \cong \Z/k$, there exist $g \in G$ and a homomorphism $\varphi:\pi_1 (M) \to H_k $ such that
\[\raisebox{-0.5\height}{\includegraphics[scale=1]{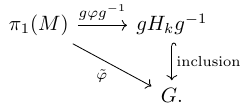}}\]
By (\ref{EQ:Ind}), we have
\begin{equation}\label{EQ:u1}
\begin{split}
(\psi_G \circ B\tilde{\varphi}_*) \circ \iota_{\pi_1 (M)} ( [M]_s )&= 
(\mathrm{Ind}^G_{g{H_k}g^{-1}} \circ \psi_{g{H_k}g^{-1}} \circ B(g\varphi g^{-1})_*) \circ \iota_{\pi_1 (M)}( [M]_s ) \\ &= 
\mathrm{Ind}^G_{{H_k}} \circ \psi_{{H_k}} \circ B\varphi_* \circ \iota_{\pi_1 (M)}( [M]_s ) \in \mathrm{Im}\,\mathrm{Ind}^G_{{H_k}} \otimes \Z_{(2)}/\Z. 
\end{split}
\end{equation}
In fact, for $r \in \Z/n$, the induced representation $\mathrm{Ind}_{\Z/m}^{\Z/nm} : R(\Z/n) \to R(\Z/nm)$ is described by
$
\mathrm{Ind}_{\Z/n}^{\Z/nm} \rho^{(n)}_{r} = \sum_{i=0}^{m-1} \rho^{(nm)}_{ni+r}.
$
Hence, via the inclusion from $H_k$ into $G$, 
Lemmas \ref{LEM:IndT}, \ref{LEM:IndB}, and \ref{LEM:IndU} show that $\mathrm{Ker} \, \mathrm{Ind}^G_{H_k}$ is additively generated by the following sets: 

\noindent
(i) if $k\neq p$,
\[
\left\{ \rho^{(k)}_{i} - \rho^{(k)}_{-i} \in R(\Z/k) \middle| i \in \Z/k \right\}.
\]

\noindent
(ii) if $k=p$,
\[
\left\{ \rho^{(p)}_{1} - \rho^{(p)}_{s} \in R(\Z/p) \middle| s \in Q_p \right\} 
\sqcup 
\left\{ \rho^{(p)}_{\Delta} - \rho^{(p)}_{s \Delta} \in R(\Z/p) \middle| s \in Q_p \right\}.
\]
Hereafter, the submodule $\mathrm{Ker} \, \mathrm{Ind}^G_{H_k}$ is denoted by $\mathcal{K}_k$.
Thus, by the homomorphism theorem, $\mathrm{Ind}^G_{H_k} : R(H_k) \to R(G)$ induces an additive isomorphism
\[
\overline{\mathrm{Ind}}^G_{H_k} : R(H_k)/\mathcal{K}_k \xrightarrow{\quad\sim\quad} \mathrm{Im}\,\mathrm{Ind}^G_{H_k}.
\]
Then, the right-hand side of (\ref{EQ:u1}) is equal to the image of $\overline{\mathrm{Ind}}^G_{H_k}$ from 
\begin{equation}\label{EQ:u2}
[ \psi_{{H_k}} \circ B\varphi_* \circ \iota_{\pi_1(M)}( [M]_s )] \in R(H_k)/\mathcal{K}_k \otimes \Z_{(2)}/\Z .
\end{equation}
Here, for $x \in R(H_k)$, we denote by $[x] \in R(H_k)/\mathcal{K}_k$ the equivalence class of $x$.
By considering both $\pi_1(M)$ and $H_k$ to be $\Z/k$ with the canonical generators,
(\ref{EQ:psiG}) shows that (\ref{EQ:u2}) is equal to $[\varphi_! \circ \alpha^{\mathrm{rp}}_\Z (\Z/k,S^3)].$

Now, we canonically regard $\overline{\mathrm{Ind}}^G_{H_k} \otimes \mathrm{Id}_{\Z_{(2)}/\Z}$ as a homomorphism between group rings 
\[
\overline{\mathrm{Ind}}^G_{H_k} \otimes \mathrm{Id}_{\Z_{(2)}/\Z} : \Z [R(\Z/k)/\mathcal{K}_k \otimes \Z_{(2)}/\Z] \to \Z[R(G)\otimes \Z_{(2)}/\Z].
\]
Thus,
$\text{$\alpha$-}\mathrm{KDW}_G(M)$ is equal to
\[
\overline{\mathrm{Ind}}^G_{H_k} (
(1-m) \boldsymbol{o} + m \sum_{\varphi \in \mathrm{Hom}(\pi_1 (M), \Z/k)} 1_\Z [\varphi_! \circ \alpha^{\mathrm{rp}}_\Z (\Z/k,S^3)]) \in \Z [R(\Z/k)/\mathcal{K}_k \otimes \Z_{(2)}/\Z],
\]
where $m$ denotes the number of cyclic subgroups of order $k$ and $\boldsymbol{o} \in R(\Z/k)/\mathcal{K}_k \otimes \Z_{(2)}/\Z$ denotes the additive zero.
To summarize:

\begin{Proposition}\label{lenswithPSL}
Under the above notation,
$\text{$\alpha$-}\mathrm{KDW}_G(M)$ corresponds to the image of 
\[
(1-m_k) \boldsymbol{o} + m_k \,\mathcal{Q}(\alpha\mathchar`-\mathrm{KDW}_{\Z/k} (M))
\]
under $\overline{\mathrm{Ind}}^G_{H_k} \otimes \mathrm{Id}_{\Z_{(2)}/\Z}$,
where 
\[
m_k=
\left\{
\begin{split}
&(p^2+p)/2, &&\text{if $k|(p-1)$,}\\
&p+1, &&\text{if $k=p$,}\\
&(p^2-p)/2, &&\text{if $k|(p+1)$.}\\
\end{split}
\right.
\]
Here, $\mathcal{Q} : \Z[R(H_k)\otimes \Z_{(2)}/\Z] \to \Z[R(H_k) / \mathcal{K}_k \otimes \Z_{(2)}/\Z]$ is defined by the canonical extension of the quotient map $[\cdot] : R(H_k) \to R(H_k) / \mathcal{K}_k$.
\end{Proposition}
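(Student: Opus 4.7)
The plan is to expand $\alpha\mathchar`-\mathrm{KDW}_G(M)$ according to its defining sum and reorganize by the image subgroup. Since $\pi_1(M)\cong\Z/k$ with $k$ prime, every non-trivial $\tilde{\varphi}\in\Hom(\pi_1(M),G)$ is injective with image a cyclic subgroup of order $k$, while the trivial homomorphism factors through a point and contributes the additive zero $\boldsymbol{o}\in R(G)\otimes\Z_{(2)}/\Z$. So it suffices to show that the non-trivial part of $\alpha\mathchar`-\mathrm{KDW}_G(M)$ consists of $m_k$ identical copies of the image of the non-trivial part of $\mathcal{Q}(\alpha\mathchar`-\mathrm{KDW}_{\Z/k}(M))$ under $\overline{\mathrm{Ind}}^G_{H_k}$, and then adjust the $\boldsymbol{o}$-coefficient by $(1-m_k)$.

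First I would verify that $G$ has exactly $m_k$ cyclic subgroups of order $k$, all conjugate to $H_k$. Reading orders and class sizes off of Tables \ref{TAB:rep1} and \ref{TAB:rep2}, the elements of order $k$ partition into $(k-1)/2$ conjugacy classes of size $p^2+p$ when $k\mid(p-1)$, into $(k-1)/2$ classes of size $p^2-p$ when $k\mid(p+1)$, and into two classes of size $(p^2-1)/2$ when $k=p$. Since each cyclic subgroup of order $k$ contains exactly $k-1$ elements of order $k$, dividing the total element counts by $k-1$ yields the stated values of $m_k$. Conjugacy of all such subgroups then follows because every element of order $k$ is $G$-conjugate to a generator of $H_k$, and $H_k$ has no proper non-trivial subgroups.

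Next, for each non-trivial $\tilde{\varphi}$ with image $gH_k g^{-1}$, conjugation by $g$ yields a unique non-trivial $\varphi\in\Hom(\pi_1(M),H_k)$ such that $\tilde{\varphi}$ agrees with the composition $\pi_1(M)\xrightarrow{\varphi}H_k\xrightarrow{c_g}gH_k g^{-1}\hookrightarrow G$, where $c_g$ denotes conjugation by $g$. By (\ref{EQ:Ind}) and the identification $R(H_k)\cong R(gH_k g^{-1})$ used in (\ref{EQ:u1}), the $\tilde{\varphi}$-contribution equals $\mathrm{Ind}^G_{H_k}\circ\psi_{H_k}\circ(B\varphi\circ\iota_{\pi_1(M)})_*([M]_s)$, independently of $g$. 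Hence the $m_k(k-1)$ non-trivial $\tilde{\varphi}$ contribute exactly $m_k$ copies of the sum over the $k-1$ non-trivial $\varphi$. Passing through $\mathcal{Q}$ and $\overline{\mathrm{Ind}}^G_{H_k}$ then turns $m_k\,\mathcal{Q}(\alpha\mathchar`-\mathrm{KDW}_{\Z/k}(M))$ into $\alpha\mathchar`-\mathrm{KDW}_G(M)$ except on the $\boldsymbol{o}$-term, whose coefficient becomes $m_k$ rather than $1$; adding $(1-m_k)\boldsymbol{o}$ restores the single trivial-homomorphism contribution, completing the equality.

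The main obstacle is the counting in the first step: the parameterizations in Tables \ref{TAB:rep1} and \ref{TAB:rep2} identify certain pairs of $T$-representatives (and of $B$-representatives) under $G$-conjugacy, and isolating precisely the order-$k$ classes while keeping the three divisibility cases for $k$ and the two parity cases for $p\bmod 4$ straight requires careful case analysis. The remaining algebraic manipulation is essentially a reformulation of (\ref{EQ:Ind})--(\ref{EQ:u2}), which have already been worked out before the statement.
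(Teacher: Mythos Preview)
Your proposal is correct and follows the same overall structure as the paper: the algebraic reduction via (\ref{EQ:Ind})--(\ref{EQ:u2}) is exactly what the paper has already established in Section~\ref{PSLlens} before the statement, and the remaining content of the proof is the count of cyclic subgroups of order $k$ and their conjugacy to $H_k$.

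The only genuine difference is in how the count $m_k$ is obtained. You count elements of order $k$ directly from Tables~\ref{TAB:rep1} and~\ref{TAB:rep2} and divide by $k-1$; this is quick and elementary but leans on the class tables. The paper instead argues via Sylow theory that all order-$k$ subgroups are conjugate, then computes the normalizer $N_G(H_k)$ explicitly in each case (exhibiting the element $s=\begin{psmallmatrix}0&-1\\1&0\end{psmallmatrix}$ for $k\mid p-1$, an analogous element $s'$ for $k\mid p+1$, and the upper-triangular Borel for $k=p$) to obtain $m_k=(G:N_G(H_k))$. Your route avoids the normalizer computation entirely, while the paper's route is independent of the class tables and gives the normalizers as a byproduct; either is perfectly adequate here.
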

The following lemma immediately allows us to deduce Proposition \ref{lenswithPSL}.
\begin{Lemma}
Suppose that $k$ is an odd prime factor of $p^3-p$.

\rm{(i)} If $k |p - 1$,
then the number of the subgroups of $G$ of order $k$ is $(p^2+p)/2$.

\rm{(ii)} If $k |p + 1$,
then the number of the subgroups of $G$ of order $k$ is $(p^2-p)/2$.

\rm{(iii)} If $k = p$, 
then the number of the subgroups of $G$ of order $k$ is $p+1$.
\end{Lemma}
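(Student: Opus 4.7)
The plan is to exploit the elementary identity that, for a prime $k$, the number of subgroups of $G$ of order $k$ equals the number of elements of order $k$ divided by $k-1$, since any such subgroup is cyclic and has exactly $\phi(k) = k-1$ generators, and any two distinct subgroups of prime order intersect trivially. Thus, for parts (i) and (ii), I would read off element counts directly from the conjugacy class tables (Tables \ref{TAB:rep1} and \ref{TAB:rep2}), and for (iii) I would argue via Sylow.

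For part (i), suppose $k \mid p-1$ is an odd prime. Inside $T \cong \Z/((p-1)/2)$ there are exactly $\phi(k)=k-1$ elements of order $k$. By Tables \ref{TAB:rep1} and \ref{TAB:rep2}, two elements $\sigma_T(\zeta_-^a), \sigma_T(\zeta_-^b)$ of $T$ (of order $k$, so neither is $I_2$ nor $\sigma_T(\zeta_-^{(p-1)/4})$) are $G$-conjugate iff $a \equiv \pm b \pmod{(p-1)/2}$, and the $G$-conjugacy class of any such element has size $p^2 + p$. The $k-1$ elements of order $k$ in $T$ therefore split into $(k-1)/2$ conjugacy classes, each of size $p^2+p$. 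Since every cyclic subgroup of order $k$ is $G$-conjugate to the unique $H_k \subset T$ of order $k$ (all such subgroups being Sylow $k$-subgroups, hence conjugate), the total number of elements of order $k$ in $G$ equals $(k-1)(p^2+p)/2$. Dividing by $k-1$ yields $(p^2+p)/2$.

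Part (ii) is symmetric: with $B \cong \Z/((p+1)/2)$ playing the role of $T$, the $k-1$ elements of order $k$ in $B$ form $(k-1)/2$ conjugacy classes in $G$ each of size $p^2 - p$, by the same reading of Tables \ref{TAB:rep1} and \ref{TAB:rep2}. Again all subgroups of order $k$ are conjugate (Sylow), so the element count is $(k-1)(p^2-p)/2$, and division by $k-1$ gives $(p^2-p)/2$.

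For part (iii), $|G| = p(p^2-1)/2$ so a Sylow $p$-subgroup of $G$ has order $p$, and $U$ is one such. Subgroups of order $p$ coincide with Sylow $p$-subgroups, so their number $n_p$ satisfies $n_p \equiv 1 \pmod p$ and $n_p \mid (p^2-1)/2$. The cleanest way to pin down the value is to use the natural action of $G$ on $\mathbb{P}^1(\F_p)$: this action is transitive with point stabilizer the image of the Borel subgroup, which has a unique Sylow $p$-subgroup, and conversely each Sylow $p$-subgroup fixes exactly one point of $\mathbb{P}^1(\F_p)$. This produces a bijection between Sylow $p$-subgroups and the $p+1$ points of $\mathbb{P}^1(\F_p)$. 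I expect no real obstacle here; the only mildly delicate point is checking the conjugacy claim in (i)--(ii) (that the $G$-conjugacy in $T$ identifies $\sigma_T(\zeta_-^a)$ with $\sigma_T(\zeta_-^{-a})$ and nothing more), but this is exactly the content already encoded in the range of the indexing parameter $\ell$ in the tables.
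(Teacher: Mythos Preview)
Your approach is correct and genuinely different from the paper's. The paper computes the normalizer $N_G(H_k)$ explicitly in each case (e.g.\ showing $N_G(H_k)=T\sqcup sT$ for $k\mid p-1$, and for $k=p$ computing directly that $N_G(U)$ is the upper-triangular subgroup) and then takes the index. You instead count elements of order $k$ from the conjugacy tables and divide by $k-1$; for $k=p$ you use the action on $\mathbb{P}^1(\F_p)$ rather than a normalizer computation. Your route is lighter once Tables~\ref{TAB:rep1} and~\ref{TAB:rep2} are in hand, while the paper's is more self-contained.

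One correction: your parenthetical ``all such subgroups being Sylow $k$-subgroups, hence conjugate'' is false in general, since $k^2$ may divide $(p-1)/2$ (e.g.\ $p=19$, $k=3$). What you actually need is either (a) the observation that Tables~\ref{TAB:rep1} and~\ref{TAB:rep2} exhaust the conjugacy classes, so every element of order $k$ with $k\mid p-1$ already appears among the $\sigma_T(\zeta_-^\ell)$, or (b) the argument the paper gives: Sylow $k$-subgroups are cyclic (lying in $T$, $U$, or $B$), hence each contains a unique subgroup of order $k$, and since all Sylow $k$-subgroups are conjugate so are all order-$k$ subgroups. Either fix is immediate, so this does not undermine your strategy.
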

\begin{proof}
Since there is a $k$-Sylow subgroup of $G$ in the cyclic subgroups $T, U$, or $B$,
each of the $k$-Sylow subgroups of $G$ is cyclic.
Therefore, each $k$-Sylow subgroup contains exactly one cyclic subgroup of order $k$.
Furthermore, since each cyclic subgroup of order $k$ is contained in one of the $k$-Sylow subgroups and all the $k$-Sylow subgroups are mutually conjugate to each other, all the cyclic subgroups of order $k$ are mutually conjugate to each other.
Therefore, the number of cyclic subgroups of order $k$ in $G$ is equal to the index $(G: N_G(H_k))$.
Here, $N_G(H_k)$ is the normalizer of $H_k$ in $G$.
Now, let us compute the index $(G: N_G(H_k))$.

(i)
Suppose that $k |p - 1$, and
let $
s=
\begin{pmatrix}
0 & -1 \\
1 & 0 
\end{pmatrix}.
$
We note that $s \sigma_{T}(\zeta_-^{\frac{p-1}{2 k}}) s^{-1} = \sigma_{T}(\zeta_-^{(k-1) \frac{p-1}{2 k}})$.
Since $k\neq 2$, the conjugacy class in $H_k$ of the generator $\sigma_{T}(\zeta_-^{\frac{p-1}{2 k}})\in H_k$ over $N_G (H_k)$ is
\[
\{\sigma_{T}(\zeta_-^{\frac{p-1}{2 k}}), \sigma_{T}(\zeta_-^{(k-1) \frac{p-1}{2 k}}) \}.
\]
Therefore, for $g \in N_G(H_k)$, either $g \sigma_{T}(\zeta_-^{\frac{p-1}{2 k}}) g^{-1} = \sigma_{T}(\zeta_-^{\frac{p-1}{2 k}})$ or $g \sigma_{T}(\zeta_-^{\frac{p-1}{2 k}}) g^{-1} = \sigma_{T}(\zeta_-^{(k-1) \frac{p-1}{2 k}})$ must necessarily hold.
These equalities imply $N_G(H_k) \subset T \sqcup sT$.
Conversely, when $g \in T \sqcup sT$, we easily see $g\in N_G(H_k)$. 
Hence, $|N_G(H_k)| = |T \sqcup sT| = p-1$, which leads to $(G: N_G(H_k)) = (p^2+p)/2$.

(ii)
Suppose that $k |p - 1$.
Take $\widetilde{x}$ and $\widetilde{y} \in \F_p$ satisfying $\widetilde{x}^2 - \Delta \widetilde{y}^2 = -1$ and let
\[
s' = \begin{pmatrix}
\widetilde{x} & \Delta \widetilde{y} \\
\widetilde{y} & \widetilde{x} 
\end{pmatrix}
\begin{pmatrix}
1 & 0 \\
0 & -1 
\end{pmatrix}.
\]
Then, by replacing $s$ with $s'$, claim (ii) follows in a similar manner to claim (i).

(iii) Suppose that $k=p$. 
Then, $H_k$ corresponds to $U$.
Since \[
\begin{pmatrix}
a & b \\
c & d 
\end{pmatrix}
\begin{pmatrix}
1 & x \\
0 & 1 
\end{pmatrix}
\begin{pmatrix}
d & -b \\
-c & a 
\end{pmatrix}
=
\begin{pmatrix}
1-acx & a^2 x \\
-c^2x & 1+acx 
\end{pmatrix},
\]
$g \in N_G(H_k)$ if and only if $g = \begin{pmatrix}
a & b \\
0 & a^{-1} 
\end{pmatrix}$ for some $a\in \F_p^\times$ and $b\in \F_p$.
Hence, $(G: N_G(H_k)) = p+1$.
\end{proof}

\section{$\alpha$-KDW invariants of Brieskorn homology spheres}\label{SEC:Brieskorn}
In this section, we compute the $\alpha$-KDW invariants of Brieskorn homology spheres.
Let us assume $G=\PSL_2 (\F_p)$ and fix $m \in \N$.
We also fix the pairwise coprime integers $k_1, k_2, \ldots , k_m$.

\subsection{Seifert homology spheres and Brieskorn homology spheres}\label{tSm}
Let us briefly review the Seifert homology spheres from \cite{JonWes}.
\emph{The Seifert homology sphere} $\Sigma = \Sigma(k_1, \ldots , k_m)$ (of genus $0$) is a spin$^{c}\,\,3$-manifold defined as the circle bundle over the orbifold of genus $0$, whose homology with $\Z$ coefficients is isomorphic to $H_*(S^3;\Z)$ (see, e.g., \cite{JonWes}).
In particular, if $m=3$, then $\Sigma$ is called \emph{a Brieskorn homology sphere}.

$\pi_1(\Sigma)$ has the group presentation
\begin{equation}
\widetilde{\Gamma} \coloneqq
\langle h,x_1,\ldots, x_m | 
x_i h x_i^{-1} h^{-1}=1, 
x_i^{k_i} h^{\ell_i} =
x_1 x_2 \cdots x_m = 1,
\text{ for $i \in \{ 1, \ldots , m\}$}
\rangle,
\end{equation}
where $\ell_1, \ell_2, \ldots , \ell_m$ are some integers satisfying
\begin{equation}\label{hom}
k_1 \cdots k_m \sum_{i=1}^{m} \dfrac{\ell_i}{k_i} \in \{ \pm 1 \}.
\end{equation}
%Note that, since $\Sigma$ is an integral homological sphere, we can choose such $\ell_1, \ldots, \ell_m$.
Let $\Gamma$ be the quotient group of $\widetilde{\Gamma}$ subject to its center $\langle h \rangle$.
That is,
\begin{equation}
\Gamma \coloneqq \langle x_1,\ldots, x_m | 
x_i^{k_i} = 
x_1 x_2 \cdots x_m = 1,
\text{ for $i \in \{ 1, \ldots , m\}$}
\rangle. 
\end{equation}
Define $\mathcal{L}$ to be the disjoint union $L^1(k_1,\ell_1) \sqcup \cdots \sqcup L^1(k_m,\ell_m)$.
According to \cite{JonWes}, by appropriately choosing the spin$^{c}$ structure of $\mathcal{L}$, we can take a spin$^{c}$ bordism $W$ between $\Sigma$ and $\mathcal{L}$.
The bordism $W$ satisfies the following three conditions:
\begin{enumerate}[(i)]
\item The boundary of $W$ is the disjoint union $\Sigma \sqcup \mathcal{L}$, and the fundamental group $\pi_1(W)$ is isomorphic to $\Gamma$.

\item The inclusion $\Sigma \subset W$ induces a homomorphism $\pi_1 (\Sigma) \to \pi_1(W)$ which coincides with the quotient homomorphism $\widetilde{\Gamma} \to \Gamma$.

\item For any $1 \leq i \leq m$, the homomorphism $\pi_1(L^1(k_i,\ell_i)) \to \pi_1(W)$ induced by the inclusion $L^1 (k_i; \ell_i)\subset W$ sends a canonical generator of $\pi_1(L^1(k_i,\ell_i))$ to $x_i$. 
Here, the canonical generator of $\pi_1(L^1(k_i,\ell_i))$ is defined in Example \ref{oo122}.
\end{enumerate}

Furthermore, we prepare a lemma.
\begin{Lemma}\label{center}
Let $J$ be a group such that every perfect subgroup of $J$ is centerless, and let $1_J$ denote the identity of $J$.
Then, any homomorphism $f: \widetilde{\Gamma} \to J$ satisfies $f(h)=1_J$.
In particular, any homomorphism $f: \widetilde{\Gamma} \to J$ factors through $\Gamma$. 
\end{Lemma}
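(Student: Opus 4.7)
The plan is to combine two observations about $\widetilde{\Gamma}$: that $h$ is central, and that $\widetilde{\Gamma}$ is perfect. Once both are in hand, the hypothesis on $J$ forces $f(h)=1_J$ immediately.

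First I would note that the relations $x_i h x_i^{-1} h^{-1} = 1$ built into the presentation of $\widetilde{\Gamma}$ say precisely that $h$ commutes with every generator $x_i$, so $h \in Z(\widetilde{\Gamma})$. Consequently, for any homomorphism $f:\widetilde{\Gamma}\to J$, the element $f(h)$ lies in the center of the image subgroup $f(\widetilde{\Gamma})\subseteq J$.

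Next I would establish that $\widetilde{\Gamma}$ is perfect. Since $\widetilde{\Gamma}=\pi_1(\Sigma)$ and, by definition, $\Sigma$ is a Seifert \emph{homology} sphere, the Hurewicz isomorphism gives
\[
\widetilde{\Gamma}^{\mathrm{ab}} \;\cong\; H_1(\Sigma;\Z) \;=\; 0,
\]
so $\widetilde{\Gamma}=[\widetilde{\Gamma},\widetilde{\Gamma}]$. (Alternatively, one reads this off directly from the presentation using the relation $k_1\cdots k_m \sum_i \ell_i/k_i = \pm 1$ from \eqref{hom}, which makes the abelianization matrix unimodular.) Because the image of a perfect group under any homomorphism is again perfect, $f(\widetilde{\Gamma})$ is a perfect subgroup of $J$.

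Combining the two observations with the hypothesis on $J$: the perfect subgroup $f(\widetilde{\Gamma})\subseteq J$ has trivial center, so $Z(f(\widetilde{\Gamma}))=\{1_J\}$, and in particular $f(h)=1_J$. Since $h$ is central in $\widetilde{\Gamma}$ the subgroup $\langle h\rangle$ is normal, and $\Gamma=\widetilde{\Gamma}/\langle h\rangle$; hence the relation $\langle h\rangle\subseteq \ker f$ yields the desired factorization $f:\widetilde{\Gamma}\twoheadrightarrow \Gamma\to J$ by the universal property of quotients. There is no real obstacle here: the only non-formal input is the perfection of $\widetilde{\Gamma}$, and this is immediate from the homology-sphere assumption.
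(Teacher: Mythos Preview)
Your proof is correct and follows essentially the same approach as the paper's: both argue that $\widetilde{\Gamma}$ is perfect (via $H_1(\Sigma;\Z)=0$), deduce that $\mathrm{Im}\,f$ is a perfect subgroup of $J$ and hence centerless, and conclude $f(h)=1_J$ since $f(h)$ lies in the center of the image. Your write-up is simply more explicit about why $h$ is central and why the factorization through $\Gamma$ follows.
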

\begin{proof}
Since $H_1(M;\Z) = 0$, $\widetilde{\Gamma}$ is perfect.
Hence, $\mathrm{Im} f$ is also perfect, and therefore  is centerless.
Since $f(h)$ belongs to the center of $\mathrm{Im} f$, it follows that $f(h)=1_J$.
\end{proof}

\begin{Example}\label{sur}
For $p \geq 5$, $\PSL_2(\F_p)$ is an example of a group such that all perfect subgroups are centerless \cite{Hup,Luc}.
Explicitly, the proper perfect subgroups of $\PSL_2(\F_p)$ uniquely exist if $p^2 \equiv 1 \mod 5$, and is isomorphic to the alternating group $A_5$.
Especially, if $m \geq 3$ and $k_1,\ldots,k_m$ are mutually distinct odd primes, any homomorphism $f:\widetilde{\Gamma}\to\PSL_2(\F_p)$ is either surjective or trivial since $|A_5|=60$ and at least one of $k_1,\ldots,k_m$ is greater than $5$.
\end{Example}

\subsection{Resulting computation}\label{SEC:comp}
Let us state Theorems \ref{withoutp} and \ref{withp}, and illustrate examples.
Let $m=3$, and $k_1,k_2,k_3$ be mutually distinct odd primes dividing $|G|$.
Recall the notations $P_k^{(m)}$ and $\boldsymbol{\xi}(k;\ell)$ from Example \ref{oo122}, and $H_k, \overline{\mathrm{Ind}}^G_{H_k}, \mathcal{K}_k,$ and $[\cdot]$ from Section \ref{PSLlens}.
We denote by $\boldsymbol{o}$ the additive zero.
Moreover, let $\bigoplus_{i=1}^{3} \overline{\mathrm{Ind}}^G_{H_{k_i}}: \bigoplus_{i=1}^{3}R(H_{k_i})/\mathcal{K}_{k_i} \to R(G)$ be the map defined by 
\[
\bigoplus_{i=1}^{3} \overline{\mathrm{Ind}}^G_{H_{k_i}} ( \varphi_1, \varphi_2, \varphi_3)
\coloneqq
\sum_{i=1}^{3} \overline{\mathrm{Ind}}^G_{H_{k_i}} ( \varphi_i )
\]
for representations $\varphi_1\in R(H_{k_1}), \varphi_2\in R(H_{k_2}),$ and $\varphi_3 \in R(H_{k_3})$,
and regard $\bigoplus_{i=1}^{3} \overline{\mathrm{Ind}}^G_{H_{k_i}} \otimes \mathrm{Id}_{\Z_{(2)}/\Z}$ as a homomorphism
$
\Z[\bigoplus_{i=1}^{3}R(H_{k_i})/\mathcal{K}_{k_i}\otimes \Z_{(2)}/\Z] \to \Z[R(G) \otimes \Z_{(2)}/\Z].
$

\begin{Theorem}\label{withoutp}
Suppose that $p \notin \{ k_1,k_2,k_3 \}$.
Then, $\text{$\alpha$-KDW}_G (\Sigma)$ is equal to the image of 
\begin{equation}\label{EQ:withoutp}
\boldsymbol{o}+
4 |G| \sum_{m_1=1}^{(k_1-1)/2} \, \sum_{m_2=1}^{(k_2-1)/2} \, \sum_{m_3=1}^{(k_3-1)/2}
1_\Z (
[P_{k_1}^{(m_1)} \boldsymbol{\xi}(k_1;\ell_1)], \,\,
[P_{k_2}^{(m_2)} \boldsymbol{\xi}(k_2;\ell_2)], \,\,
[P_{k_3}^{(m_3)} \boldsymbol{\xi}(k_3;\ell_3)]
)
\end{equation}
under the homomorphism $\bigoplus_{i=1}^{3} \overline{\mathrm{Ind}}^G_{H_{k_i}} \otimes \mathrm{Id}_{\Z_{(2)}/\Z}$.
\end{Theorem}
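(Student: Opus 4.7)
The plan is to pass from the homotopy type of $\Sigma$ to that of $\mathcal{L} \coloneqq \bigsqcup_{i=1}^{3} L^1(k_i,\ell_i)$ via the spin$^c$ bordism $W$ of Subsection \ref{tSm}, and then apply Proposition \ref{lenswithPSL} componentwise. First, Lemma \ref{center} combined with Example \ref{sur} shows that every homomorphism $f:\widetilde{\Gamma}\to G$ is either trivial—contributing the single summand $\boldsymbol{o}$—or surjective, in which case it factors as $f = \bar f \circ \pi$ for a unique $\bar f:\Gamma \to G$. Such a $\bar f$ is determined by the triple $(g_1,g_2,g_3) \coloneqq (\bar f(x_1),\bar f(x_2),\bar f(x_3))$ satisfying $g_i^{k_i}=1$ and $g_1 g_2 g_3 = 1$. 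The simplicity of $G = \PSL_2(\F_p)$ for $p \geq 5$ forces each $g_i$ to have order exactly $k_i$, since otherwise $\mathrm{Im}(\bar f)$ would be generated by two elements whose product is trivial and hence would be cyclic.

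Second, because $W$ is a spin$^c$ bordism equipped with a classifying map to $B\Gamma$ that restricts on its boundary components to the relevant classifying maps, bordism invariance of the Thom homomorphism $\tau$ underlying Knapp's construction of $\psi_G$ yields
\[
(\iota_\Gamma|_\Sigma)_*[\Sigma]_s = \sum_{i=1}^{3} (\iota_\Gamma|_{L^1(k_i,\ell_i)})_* [L^1(k_i,\ell_i)]_s \quad \text{in } K_1(B\Gamma) \otimes \Z[1/2],
\]
up to $2$-primary torsion, which is invisible under $\psi_G$. Applying $\psi_G \circ (B\bar f)_*$ and invoking condition (iii) of $W$—that the canonical generator of $\pi_1(L^1(k_i,\ell_i))$ maps to $x_i \in \Gamma$—we see that $\psi_G \circ (Bf \circ \iota_{\widetilde{\Gamma}})_*[\Sigma]_s$ decomposes as a sum over $i$ of three terms of the same shape as in Proposition \ref{lenswithPSL}. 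After conjugating $g_i$ to the canonical generator $\eta_{k_i}^{m_i}$ of $H_{k_i}$ for some $m_i \in \Z/k_i \setminus \{0\}$, the chain (\ref{EQ:u1})--(\ref{EQ:alphaLens}) identifies the $i$-th summand with $\overline{\mathrm{Ind}}^G_{H_{k_i}}([P_{k_i}^{(m_i)} \boldsymbol{\xi}(k_i,\ell_i)])$. By Lemmas \ref{LEM:IndT} and \ref{LEM:IndB}, this class in $R(H_{k_i})/\mathcal{K}_{k_i}$ depends only on the unordered pair $\{m_i,-m_i\}$, which we parametrize by $m_i \in \{1,\ldots,(k_i-1)/2\}$.

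Finally, grouping the surjective $\bar f$ by the resulting triple $(m_1,m_2,m_3)$ produces the right-hand side of (\ref{EQ:withoutp}), once we check that each such tuple is realized by exactly $4|G|$ surjective $\bar f$. One factor of $|G|$ is immediate, since the conjugation action of $G$ on $\mathrm{Hom}^{\mathrm{surj}}(\Gamma,G)$ is free (the centralizer of $\mathrm{Im}\,\bar f = G$ equals $Z(G) = \{1\}$). The remaining factor of $4$, which is the main obstacle of the proof, must be extracted from the Frobenius counting formula
\[
\#\{(g_1,g_2,g_3) \in C_1 \times C_2 \times C_3 : g_1 g_2 g_3 = 1\} = \frac{|C_1||C_2||C_3|}{|G|}\sum_{\chi} \frac{\chi(g_1)\chi(g_2)\chi(g_3)}{\chi(1)},
\]
applied with $C_i$ the (unique) conjugacy class of order $k_i$ in $G$, by evaluating each irreducible character of $\PSL_2(\F_p)$—trivial, Steinberg, and the principal/discrete series—on the representatives in Tables \ref{TAB:rep1}--\ref{TAB:rep2}, and subtracting any non-surjective contributions. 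I expect the verification that this character sum collapses to the uniform value $4$ across all $(m_1,m_2,m_3)$ to be the most delicate step, as it hinges on orthogonality identities specific to the $\PSL_2(\F_p)$ character table rather than on any formal bordism or $K$-theoretic property used earlier.
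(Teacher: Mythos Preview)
Your bordism reduction---factoring through $\Gamma$ via Lemma~\ref{center} and Example~\ref{sur}, then using $W$ to split the Knapp class of $\Sigma$ into the three lens-space contributions, and identifying each with $\overline{\mathrm{Ind}}^G_{H_{k_i}}\bigl([P_{k_i}^{(m_i)}\boldsymbol{\xi}(k_i;\ell_i)]\bigr)$---is exactly what the paper does in Section~\ref{SEC:proofmainthm}.

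The divergence is in how you obtain the multiplicity $4|G|$. The paper does \emph{not} use the Frobenius formula. Instead it invokes the trace/Chebyshev parametrization built in Subsection~\ref{TRG}: the map $\overline{\mathcal{F}}:(\Hom(\Gamma,G)\setminus\{e_G\})/\mathrm{conj}\to\mathcal{A}$ is a $2$-to-$1$ surjection by Lemmas~\ref{LEM:surjective}--\ref{numd}, the further projection $\mathcal{P}:\mathcal{A}\to\prod_i[\! [k_i]\! ]$ is again $2$-to-$1$ (the classes of $(a,b,c)$ and $(a,b,-c)$ in $\mathcal{A}$ are distinct but land on the same $(m_1,m_2,m_3)$), and the free $G$-conjugation action from Proposition~\ref{PROP:order} supplies the remaining factor $|G|$. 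This yields $2\cdot 2\cdot|G|=4|G|$ directly and uniformly, with no appeal to the character table. Your Frobenius route is a legitimate alternative---it is essentially what \cite{Mar} does, as the paper itself notes in the Remark following Proposition~\ref{PROP:order}---but you leave the decisive character sum unevaluated, and your parenthetical ``(unique) conjugacy class of order $k_i$'' is wrong: when $k_i\neq p$ there are $(k_i-1)/2$ such classes, one for each $m_i$, so the Frobenius count must be run separately for every triple $(C_1,C_2,C_3)$ and shown to collapse to the same value $4|G|$ each time. The paper's constructive route via $\mathcal{A}$ sidesteps that character-table computation entirely, and in the process furnishes explicit representatives $\varphi_1,\varphi_2$ for each conjugacy class of homomorphism (Lemma~\ref{numc}), which is what makes the identification of $\alpha_j^{(a,b,c)}$ with the triple $\bigl([P_{k_i}^{(m_i)}\boldsymbol{\xi}(k_i;\ell_i)]\bigr)_{i}$ immediate.
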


\begin{Theorem}\label{withp}
Suppose $k_1=p$.
Then, $\text{$\alpha$-KDW}_G (\Sigma)$ is equal to the image of 
\begin{equation}\label{EQ:withp}
\begin{split}
\boldsymbol{o}
&+ 2|G| \sum_{m_2=1}^{(k_2-1)/2} \, \sum_{m_3=1}^{(k_3-1)/2}
1_\Z (
[P_{p}^{(1)} \boldsymbol{\xi}(p;\ell_1)], \,\,
[P_{k_2}^{(m_2)} \boldsymbol{\xi}(k_2;\ell_2)], \,\,
[P_{k_3}^{(m_3)} \boldsymbol{\xi}(k_3;\ell_3)] 
)\\ \,\,\,\,
&+2|G| \sum_{m_2=1}^{(k_2-1)/2} \, \sum_{m_3=1}^{(k_3-1)/2}
1_\Z (
[P_{p}^{(\Delta)} \boldsymbol{\xi}(p;\ell_1)], \,\,
[P_{k_2}^{(m_2)} \boldsymbol{\xi}(k_2;\ell_2)], \,\,
[P_{k_3}^{(m_3)} \boldsymbol{\xi}(k_3;\ell_3)] 
)
\end{split}
\end{equation}
under the homomorphism $\bigoplus_{i=1}^{3} \overline{\mathrm{Ind}}^G_{H_{k_i}} \otimes \mathrm{Id}_{\Z_{(2)}/\Z}$.
\end{Theorem}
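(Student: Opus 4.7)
The plan is to exploit the spin$^c$-bordism $W$ between $\Sigma$ and $\mathcal{L} = \bigsqcup_{i=1}^3 L^1(k_i, \ell_i)$ from Section \ref{tSm} to reduce the computation of $\alpha\text{-KDW}_G(\Sigma)$ to the lens-space contributions treated in Proposition \ref{lenswithPSL}, and then to carry out a combinatorial count of homomorphisms $\widetilde{\Gamma} \to G$.

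First I classify homomorphisms $\tilde f \colon \widetilde{\Gamma} \to G$. By Lemma \ref{center}, every such $\tilde f$ factors through the quotient $\Gamma$, and by Example \ref{sur} the induced $f \colon \Gamma \to G$ is either trivial (contributing $1_\Z \boldsymbol{o}$) or surjective. In the surjective case each $y_i := f(x_i)$ must have order exactly $k_i$: if some $y_i = 1$, then $y_1 y_2 y_3 = 1$ together with pairwise coprimality of the $k_j$ forces at least one further $y_j = 1$, reducing $\mathrm{Im}\,f$ to a cyclic subgroup and contradicting surjectivity onto $G$.

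Next I invoke $W$. For each surjective $f$, the identification $\pi_1(W) = \Gamma$ yields a classifying map $\bar f \colon W \to BG$, realizing $(W, \bar f)$ as a spin$^c$-bordism between $(\Sigma,\, B\tilde f \circ \iota_{\pi_1(\Sigma)})$ and $\bigsqcup_i (L^1(k_i, \ell_i),\, B(f \circ j_i) \circ \iota_{\pi_1(L^1(k_i, \ell_i))})$, where $j_i \colon \Z/k_i \to \Gamma$ sends the canonical generator to $x_i$ by property (iii) of $W$. Bordism-invariance of the composite $\psi_G \circ \tau$ (which is well-defined modulo $\Z$ by the discussion following the definition of $\psi_G$) then gives
\[
\psi_G \circ (B\tilde f \circ \iota_{\pi_1(\Sigma)})_* ([\Sigma]_s) \;=\; \sum_{i=1}^3 \psi_G \circ (B(f \circ j_i) \circ \iota_{\pi_1(L^1(k_i, \ell_i))})_* ([L^1(k_i, \ell_i)]_s).
\]
Combining (\ref{EQ:Ind}) with the analysis of Section \ref{PSLlens}, each summand on the right equals $\overline{\mathrm{Ind}}^G_{H_{k_i}}([P^{(m_i)}_{k_i} \boldsymbol{\xi}(k_i; \ell_i)])$, where $m_i$ records the class of $y_i$ in $H_{k_i}/\mathcal{K}_{k_i}$. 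For $k_1 = p$, Lemma \ref{LEM:IndU} identifies the non-trivial $m_1$-classes with $\{1, \Delta\}$ (square vs.\ non-square cosets of $Q_p$); for $i = 2, 3$, Lemmas \ref{LEM:IndT} and \ref{LEM:IndB} give classes $m_i \in \{1, \ldots, (k_i-1)/2\}$.

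Finally I count the surjective $f$ of each type $(m_1, m_2, m_3)$. Since $G = \PSL_2(\F_p)$ is simple, every surjective $f$ has trivial centralizer, so each $G$-conjugation orbit of such $f$ has size $|G|$; equivalently, the count of type-$(m_1, m_2, m_3)$ homomorphisms equals $|G|$ times the number of $G$-conjugation orbits of triples $(y_1, y_2, y_3)$ with $y_i$ in the prescribed conjugacy class $C_{i, m_i}$ of $G$ and $y_1 y_2 y_3 = 1$. Applying Frobenius' structure-constant formula together with the character table of $\PSL_2(\F_p)$ and Tables \ref{TAB:rep1}, \ref{TAB:rep2} then produces exactly two such orbits per type, giving the coefficient $2|G|$. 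The principal obstacle is precisely this enumeration: the cancellations in $\sum_\chi \chi(C_1)\chi(C_2)\chi(C_3)/\chi(1)$ across the principal, discrete, and Steinberg series of $\PSL_2(\F_p)$ must be resolved uniformly across all admissible types to confirm the constant $2|G|$.
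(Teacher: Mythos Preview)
Your reduction via the bordism $W$ and the factorization through $\Gamma$ using Lemma \ref{center} and Example \ref{sur} are exactly what the paper does in the opening of Section~5; likewise the identification of each lens-space summand with $\overline{\mathrm{Ind}}^G_{H_{k_i}}\bigl([P^{(m_i)}_{k_i}\boldsymbol{\xi}(k_i;\ell_i)]\bigr)$ matches (\ref{EQ:u1})--(\ref{EQ:u2}).

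The genuine divergence is in how the homomorphisms are counted. You propose to fix the type $(m_1,m_2,m_3)\in\{1,\Delta\}\times[\![k_2]\!]\times[\![k_3]\!]$, apply Frobenius' structure-constant formula to the three specified conjugacy classes, and divide by $|G|$ (using the trivial centralizer, which is justified by Example~\ref{sur}) to obtain two orbits per type. The paper instead parametrizes $\Hom(\Gamma,G)\setminus\{e_G\}$ by the trace set $\mathcal{A}$ and proves, via explicit matrix reductions (Lemmas \ref{LEM:surjective}, \ref{numc}, \ref{numd}), that $\overline{\mathcal{F}}$ is a $2$-to-$1$ map onto $\mathcal{A}$, with the two classes over each $(a,b,c)$ distinguished precisely by $\varphi_j(x_1)\in\{\sigma_U(1),\sigma_U(\Delta)\}$ when $k_1=p$. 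The count $2|G|$ then falls out of the $2$-to-$1$ projection $\mathcal{P}:\mathcal{A}\to[\![k_2]\!]\times[\![k_3]\!]$. The advantage of the paper's approach is that it is completely elementary---the normal forms in the proof of Lemma~\ref{numc} are found by conjugating inside $C_G(\varphi(x_1))$---and it simultaneously produces the explicit splitting of $\{\varphi_1,\varphi_2\}$ by the two unipotent classes, which is exactly what feeds into (\ref{EQ:alpha12}). Your approach is more conceptual and, once the character sum is evaluated, would be shorter; but as you acknowledge, that evaluation across all series of $\PSL_2(\F_p)$ is precisely the work you have not done. The paper's Remark after Proposition~\ref{PROP:order} notes that Marion \cite{Mar} already obtained the cardinality $|\Hom(\Gamma,G)|$ by Frobenius' method; your refinement---applying it class-by-class to get the type-wise count---would indeed suffice here and does not seem to appear in that reference, but you should be aware that the paper's authors deliberately chose the direct route to avoid this computation.
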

The proofs are shown in Section \ref{SEC:proofmainthm}. 

\begin{Example}
Suppose that $p=29$, $(k_1,k_2,k_3)=(3,5,7)$, and $(\ell_1,\ell_2,\ell_3)=(2,1,-6)$.
Then, $p \notin \{ k_1,k_2,k_3 \}$, and (\ref{EQ:withoutp}) in Theorem \ref{withoutp} is computed as
\[ 
\boldsymbol{o}+
48720 \sum_{1 \leq i \leq 2, \,1 \leq j \leq 3} (\boldsymbol{u}_{11},\boldsymbol{u}_{2i},\boldsymbol{u}_{3j}).
\]
Here, 
\[
\boldsymbol{u}_{11}=\frac{1}{3}
\begin{pmatrix}
1 \\
2 
\end{pmatrix}, \quad
\boldsymbol{u}_{21}=\frac{1}{5}
\begin{pmatrix}
4 \\
2 \\
4 
\end{pmatrix}, \quad
\boldsymbol{u}_{22}=\frac{1}{5}
\begin{pmatrix}
4 \\
4 \\
2 
\end{pmatrix},
\]
\[
\boldsymbol{u}_{31}=\frac{1}{7}
\begin{pmatrix}
3 \\
3 \\
1 \\
0 
\end{pmatrix}, \quad
\boldsymbol{u}_{32}=\frac{1}{7}
\begin{pmatrix}
3 \\
0 \\
3 \\
1 
\end{pmatrix}, \quad
\boldsymbol{u}_{33}=\frac{1}{7}
\begin{pmatrix}
3 \\
1 \\
0 \\
3 
\end{pmatrix}. \quad
\]
\end{Example}

\begin{Example}
Suppose that $p=11$, $(k_1,k_2,k_3)=(p,3,5)$, and $(\ell_1,\ell_2,\ell_3)=(3,1,-3)$.
Then, (\ref{EQ:withp}) in Theorem \ref{withp} is presented by
\[
\boldsymbol{o} + 
2640( 
(\boldsymbol{u}_{11},\boldsymbol{u}_{21},\boldsymbol{u}_{31}) + 
(\boldsymbol{u}_{11},\boldsymbol{u}_{21},\boldsymbol{u}_{32}) 
).
\]
Here, 
\[
\boldsymbol{u}_{11}=
\frac{1}{11}
\begin{pmatrix}
6 \\
8 \\
8 
\end{pmatrix},\quad
\boldsymbol{u}_{21}=
\frac{1}{9}
\begin{pmatrix}
2 \\
7 
\end{pmatrix},\quad
\boldsymbol{u}_{31}=
\frac{1}{5}
\begin{pmatrix}
0 \\
2 \\
3 
\end{pmatrix},\quad
\boldsymbol{u}_{32}=
\frac{1}{5}
\begin{pmatrix}
0 \\
3 \\
2 
\end{pmatrix}.
\]
\end{Example}

\subsection{$\Hom (\pi_1(\Sigma), \PSL_2 (\F_p))$ and the Chebyshev polynomials}\label{TRG}
In this subsection, we analyze  $\Hom(\pi_1(\Sigma),G)$ in order to prove Theorems \ref{withoutp} and \ref{withp}.
The proofs of the lemmas and the proposition in this subsection can be found in \ref{SEC:proofBrieskorn}.

As in Section \ref{SEC:comp}, let $m=3$ and $k_1,k_2,$ and $k_3$ be distinct odd primes dividing $|G|$.
Let us define $\overline{\mathrm{Tr}}: G \to \F_p/\{ \pm 1 \}$ by $\overline{\mathrm{Tr}} X \coloneqq \mathrm{Tr} x$, where $x \in \SL_2(\F_p)$ is a representative of $X \in G$.
First, we recall the definition and properties of the Chebyshev polynomial.
\emph{The (normalized) Chebyshev polynomial $S_n(t) \in \F_p[t]$} is defined by
\begin{equation}
S_1(t) = 1, \qquad S_2(t)=t, \qquad S_{n+1} (t) = t S_{n}(t) - S_{n-1}(t).
\end{equation}
By the Cayley-Hamilton theorem,  the following holds for any $X \in \SL_2 (\F_p)$:
\begin{equation}\label{OrderCheb}
X^n = S_n(\mathrm{Tr} X) X - S_{n-1}(\mathrm{Tr} X) I_2.
\end{equation}
\begin{Lemma}\label{aab}
Let $X \in G \setminus \{ I_2\}$ and $n\in \N \setminus \{ 1 \}$ be odd.
Then, $X^n = I_2$ if and only if 
\begin{equation}\label{EQ:aab}
S_{2n}(\overline{\mathrm{Tr}} X) =S_{2n-1}(\overline{\mathrm{Tr}} X)+1= 0 \in \F_p/\{ \pm 1\}.
\end{equation}
\end{Lemma}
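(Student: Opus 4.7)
The plan is to reduce a statement about element orders in $G = \PSL_2(\F_p)$ to a matrix identity in $\SL_2(\F_p)$ via (\ref{OrderCheb}). The key observation, which I would establish first, is that for any lift $x \in \SL_2(\F_p)$ of $X$ the condition $X^n = I_2$ in $G$ is equivalent to $x^{2n} = I_2$ in $\SL_2(\F_p)$. Indeed, $X^n = I_2$ says exactly $x^n = \pm I_2$, which squares to $x^{2n} = I_2$; conversely, any $y \in \SL_2(\F_p)$ with $y^2 = I_2$ has minimal polynomial dividing $t^2 - 1$ and is therefore diagonalizable over the algebraic closure with eigenvalues in $\{\pm 1\}$, and the constraint $\det y = 1$ rules out $\mathrm{diag}(1,-1)$, leaving only $y = \pm I_2$.

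With this reduction in hand, I would apply (\ref{OrderCheb}) with $n$ replaced by $2n$ to write
\begin{equation*}
x^{2n} = S_{2n}(\tr x)\, x - S_{2n-1}(\tr x)\, I_2.
\end{equation*}
Setting this equal to $I_2$ rearranges to $S_{2n}(\tr x)\, x = (S_{2n-1}(\tr x)+1)\, I_2$. Since $X \neq I_2$ in $G$, the lift $x$ is not scalar; if $S_{2n}(\tr x) \neq 0$ then $x$ would be a scalar multiple of $I_2$, a contradiction. Hence $S_{2n}(\tr x) = 0$, and plugging back yields $S_{2n-1}(\tr x) + 1 = 0$ in $\F_p$. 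The converse is immediate: these two polynomial identities together give $x^{2n} = I_2$.

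The last step is to check that both equalities descend to $\F_p/\{\pm 1\}$. A straightforward induction on the recurrence $S_{n+1}(t) = t\, S_n(t) - S_{n-1}(t)$ shows $S_n(-t) = (-1)^{n-1} S_n(t)$, so $S_{2n}$ is odd and $S_{2n-1}$ is even as functions of $t$. Consequently $S_{2n}(\tr x) = 0$ depends only on $\overline{\mathrm{Tr}}X$, and $S_{2n-1}(\tr x)$ is itself invariant under $x \mapsto -x$; since the orbit of $0 \in \F_p$ under $\{\pm 1\}$ is $\{0\}$, the equation $S_{2n-1}(\overline{\mathrm{Tr}}X) + 1 = 0$ in $\F_p/\{\pm 1\}$ is the same as $S_{2n-1}(\tr x) = -1$ in $\F_p$, matching what was derived. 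There is no genuine obstacle here; the only delicate point is carefully tracking the sign ambiguity in the lift $\tr x$ and confirming that the doubled index $2n$ (rather than $n$) is precisely what is needed so that $x^{2n} = I_2$ in $\SL_2(\F_p)$ recovers $X^n = I_2$ in $\PSL_2(\F_p)$.
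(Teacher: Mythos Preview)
Your proof is correct and follows essentially the same approach as the paper: reduce $X^n=I_2$ in $\PSL_2(\F_p)$ to $x^{2n}=I_2$ in $\SL_2(\F_p)$, apply (\ref{OrderCheb}) with $2n$, and then use the parity $S_{2n}(-t)=-S_{2n}(t)$, $S_{2n-1}(-t)=S_{2n-1}(t)$ to pass to $\F_p/\{\pm 1\}$. You supply more detail than the paper at two points---the argument that $y^2=I_2$ in $\SL_2(\F_p)$ forces $y=\pm I_2$, and the explicit use of $X\neq I_2$ to conclude $S_{2n}(\tr x)=0$ from $S_{2n}(\tr x)\,x=(S_{2n-1}(\tr x)+1)I_2$---but these are exactly the gaps the paper leaves implicit.
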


Let us discuss the solutions of (\ref{EQ:aab}).
By an invariance of $\overline{\mathrm{Tr}}$ under conjugation, 
Lemma \ref{aab} shows that the solutions are described by the traces of some representatives of conjugacy classes in $G$.
Let us recall $H_k$ defined in (\ref{cyc}).
Since the subgroup $H_k$ contains all the representatives of order $k$ shown in Tables \ref{TAB:rep1} and \ref{TAB:rep2}, the following lemma is obvious.

\begin{Lemma}\label{ansPSL}
Let $k$ be an odd prime dividing $|G|$. 
Define
\[
\mathrm{Sol}_k \coloneqq \left\{ t \in \F_p/ (\Z/2) \,\, \middle| S_{2k}(t) = S_{2k-1}(t) +1 =0 \right\}.
\]

\noindent
(i)
$\mathrm{Sol}_k = \overline{\mathrm{Tr}} ( H_k \setminus \{ I_2 \} )$. 
In particular, $0 \notin \mathrm{Sol}_k$.

\noindent
(ii)
When $k$ is not equal to $p$, either $X=Y$ or $X = Y^{-1}$ if and only if $\overline{\mathrm{Tr}}X = \overline{\mathrm{Tr}}Y$ for $X,Y \in H_k \setminus \{ I_2 \}$.
Therefore, the induced map $\overline{\mathrm{Tr}} : (H_k \setminus \{ I_2 \}) / \mathrm{conj} \to \mathrm{Sol}_k$ is bijective.

\noindent
Here, $(H_k \setminus \{ I_2 \}) / \mathrm{conj}$ is the quotient set subject to conjugacy of the normalizer $N_G (H_{k})$.
\end{Lemma}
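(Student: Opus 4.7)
The plan is to split along the two parts and use Lemma \ref{aab} as the bridge between the group-theoretic condition $X^k = I_2$ and the Chebyshev condition defining $\mathrm{Sol}_k$.

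For part (i), the inclusion $\overline{\mathrm{Tr}}(H_k \setminus \{I_2\}) \subseteq \mathrm{Sol}_k$ is immediate: every nontrivial element of the cyclic group $H_k$ of odd prime order $k$ satisfies $X^k = I_2$, so Lemma \ref{aab} applies. For the reverse inclusion, given $t \in \mathrm{Sol}_k$ I would realize $t$ as $\overline{\mathrm{Tr}}X$ for some $X \in G$ by taking, for instance, the companion matrix of $X^2 - \tilde{t} X + 1$ for a chosen lift $\tilde{t} \in \F_p$. When $X \neq I_2$, Lemma \ref{aab} forces $X^k = I_2$, so $X$ has order exactly $k$; and the proof of the counting lemma following Proposition \ref{lenswithPSL} shows that every cyclic subgroup of $G$ of order $k$ is conjugate to $H_k$, so $X$ is $G$-conjugate to an element of $H_k \setminus \{I_2\}$ with the same trace. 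The degenerate case $X = I_2$ forces $t$ to be the class of $2$, and a direct check of the identity $S_{n}(2)=n$ shows this requires $p \mid 2k$, hence $k = p$, in which case $t = 2$ is attained by any unipotent in $H_p = U$. To rule out $0 \in \mathrm{Sol}_k$, I would argue that $\overline{\mathrm{Tr}}X = 0$ forces a lift $x \in \SL_2(\F_p)$ to satisfy $x^2 = -I_2$, so that $X$ has order $2$ in $G$, contradicting odd prime order $k \geq 3$; alternatively, one computes $S_{2k-1}(0) = (-1)^{k-1} = 1$ and hence $S_{2k-1}(0)+1 = 2 \neq 0$ in $\F_p$.

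For part (ii), the well-definedness of $\overline{\mathrm{Tr}}$ on $(H_k \setminus \{I_2\})/\mathrm{conj}$ is automatic since traces are conjugation invariant, and the same counting lemma shows that the $N_G(H_k)$-action on $H_k$ is inversion when $k \neq p$, so the nontrivial orbits are exactly the pairs $\{X, X^{-1}\}$. For injectivity I would split into $k \mid (p-1)$ and $k \mid (p+1)$. In the first case, writing $X = \sigma_{T}(\omega^a)$ and $Y = \sigma_{T}(\omega^b)$ with $\omega = \zeta_-^{(p-1)/(2k)}$ a primitive $2k$-th root of unity in $\F_p$, a natural $\SL_2$ lift has trace $\omega^a + \omega^{-a}$, so $\overline{\mathrm{Tr}}X = \overline{\mathrm{Tr}}Y$ becomes $\omega^a + \omega^{-a} = \pm(\omega^b + \omega^{-b})$. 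Multiplying by $\omega^{a+b}$ and factoring in both sign cases yields the four solutions $a \equiv \pm b$ and $a \equiv \pm b + k$ modulo $2k$; using $\omega^k = -1$ together with $\sigma_{T}(-x) = \sigma_{T}(x)$ to absorb the shift by $k$, all four collapse to $Y = X^{\pm 1}$. The case $k \mid (p+1)$ is parallel using $\sigma_{B}$ and $\mu_{p+1}$, with trace $\zeta + \zeta^{-1}$. Surjectivity of the induced map to $\mathrm{Sol}_k$ then follows from part (i).

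The main obstacle is the sign bookkeeping in part (ii): there are two independent sources of $\pm 1$ ambiguity, the descent from $\SL_2$ to $\PSL_2$ and the $2{:}1$ character of $\sigma_{T}$ and $\sigma_{B}$, and one must verify that the four a priori solutions of the trace equation collapse, after passing through both identifications, to just $\{X, X^{-1}\}$ inside $H_k \cong \Z/k$. Everything else is a fairly direct application of Lemma \ref{aab} together with the conjugacy of order-$k$ cyclic subgroups of $G$ established around Proposition \ref{lenswithPSL}.
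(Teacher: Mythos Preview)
Your proposal is correct and follows essentially the same route as the paper's proof: Lemma \ref{aab} plus the conjugacy of order-$k$ cyclic subgroups for part (i), and a case-by-case trace-factoring argument ($k\mid p-1$ versus $k\mid p+1$) for part (ii). Your account is more explicit in places---the paper simply says the equality in (i) ``follows from the properties of $H_k$''---and you handle the $B$-case via $\zeta+\zeta^{-1}$ rather than the paper's $(x,y)$-coordinate computation, but these are cosmetic differences.
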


Now, using the Chebyshev polynomials, we describe a relationship between $\Hom (\Gamma, G)$ and the Chebyshev polynomials.
Define $\widetilde{\mathcal{A}}$ to be
\begin{equation}
\left\{ 
(a,b,c)\in (\F_p)^3
\middle| \,\,\,\,\,
\begin{aligned}
& S_{2 k_1} (a) = S_{2 k_2} (b) = S_{2 k_3} (c) = 0,\\
& S_{2 k_1-1} (a) = S_{2 k_2-1} (b) = S_{2 k_3-1} (b) =-1
\end{aligned}
\right\}.
\end{equation}
Since $S_{2k}(-t) = - S_{2k}(t)$ and $S_{2k-1}(-t) = S_{2k-1}(t)$, 
it follows that $(a,b,c) \in \widetilde{\mathcal{A}}$ implies $(\varepsilon_1 a,\varepsilon_2 b,\varepsilon_3 c) \in \widetilde{\mathcal{A}}$ for any $\varepsilon_i \in \{\pm 1 \}$.
Let us define an equivalence relation $\sim$ on $\widetilde{\mathcal{A}}$ by
\[
(a,b,c) \sim (\varepsilon_1 a,\varepsilon_2 b,\varepsilon_3 c) \text{ if $\varepsilon_i \in \{ 1 , -1 \}$ and $\varepsilon_1\varepsilon_2\varepsilon_3 = 1,$ }
\]
and denote by $\mathcal{A}$ the quotient set $\widetilde{\mathcal{A}}/\sim$.
We define a map 
\[
\mathcal{F}: \Hom (\Gamma, G) \setminus \{ e_G \} \to \mathcal{A}
\]
by 
$\mathcal{F}(f) \coloneqq (\mathrm{Tr}X_1,\mathrm{Tr}X_2,\mathrm{Tr}(X_2^{-1} X_1^{-1}))$, where $X_1,X_2\in\SL_2(\F_p)$ are some representatives of $f(x_1),f(x_2)\in \PSL_2(\F_p)$, respectively.
Here, $e_G \in \Hom (\Gamma, G)$ denotes the trivial homomorphism.
Furthermore, $\mathcal{F}$ canonically induces $\overline{\mathcal{F}} : (\Hom (\Gamma, G) \setminus \{ e_G \})/\mathrm{conj}\to \mathcal{A}$.

\begin{Lemma}\label{LEM:surjective}
$\mathcal{F}$ is surjective.
\end{Lemma}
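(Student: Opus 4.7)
Given a representative $(a,b,c)\in\widetilde{\mathcal{A}}$ of a class in $\mathcal{A}$, the plan is to realize it as $\mathcal{F}(f)$ for some nontrivial $f\in\Hom(\Gamma,G)$. Since $\mathrm{Tr}(X^{-1})=\mathrm{Tr}(X)$ for $X\in\SL_2(\F_p)$, it suffices to produce $X_1,X_2\in\SL_2(\F_p)$ with $\mathrm{Tr}\,X_1=a$, $\mathrm{Tr}\,X_2=b$, and $\mathrm{Tr}(X_1X_2)=c$; setting $X_3\coloneqq(X_1X_2)^{-1}$ then gives $X_1X_2X_3=I_2$ and $\mathrm{Tr}\,X_3=c$ automatically. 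The Chebyshev conditions defining $\widetilde{\mathcal{A}}$, combined with Lemma \ref{aab}, force the images $\bar{X}_i\in G$ to satisfy $\bar{X}_i^{k_i}=I_2$; since the ansatz below ensures $X_1\notin\{\pm I_2\}$, the resulting $f:x_i\mapsto\bar{X}_i$ is nontrivial, and $\mathcal{F}(f)=(a,b,c)$ by construction.

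The heart of the argument is the trace realization problem: given $a,b,c\in\F_p$, exhibit $X_1,X_2\in\SL_2(\F_p)$ with the prescribed traces. I would take $X_1=\begin{pmatrix} a & -1 \\ 1 & 0\end{pmatrix}$ and parametrize $X_2=\begin{pmatrix}\alpha & \beta \\ \gamma & b-\alpha\end{pmatrix}$ subject to $\alpha(b-\alpha)-\beta\gamma=1$. A direct computation gives $\mathrm{Tr}(X_1X_2)=a\alpha+\beta-\gamma$; imposing $\mathrm{Tr}(X_1X_2)=c$, eliminating $\gamma$, and substituting into the determinant constraint reduces the problem to the quadratic
\[
\beta^2-(c-a\alpha)\beta+(\alpha^2-b\alpha+1)=0
\]
in $\beta$, whose discriminant is
\[
D(\alpha)=(a^2-4)\alpha^2+(4b-2ac)\alpha+(c^2-4)\in\F_p[\alpha].
\]
The realization thus reduces to finding $\alpha\in\F_p$ for which $D(\alpha)$ is a square in $\F_p$.

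When any of $a^2-4$, $b^2-4$, $c^2-4$ is a nonzero square in $\F_p$, the corresponding $X_i$ can be diagonalized, turning the system for the other matrix entries into purely linear equations with no quadratic obstruction. When all three are non-squares, one instead passes to the elliptic normal form $\begin{pmatrix} a/2 & \Delta y \\ y & a/2\end{pmatrix}$, using that $y^2=(a^2-4)/(4\Delta)$ is a square precisely because $a^2-4$ and $\Delta$ are both non-squares; the system for the companion matrix then becomes tractable by the same strategy. The degenerate case $a=\pm2$, which by Lemma \ref{ansPSL} only arises when some $k_i=p$, is handled via unipotent matrices in $\sigma_U(\F_p)$, where fixing $X_1$ unipotent makes the trace condition for $X_1X_2$ linear in the entries of $X_2$.

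The main obstacle is the quadratic-residue step: verifying uniformly that $D(\alpha)$ takes a square value for some $\alpha\in\F_p$ across all triples $(a,b,c)\in\widetilde{\mathcal{A}}$, particularly in the ``all non-split'' case. I expect this to require a careful split according to the quadratic characters of $a^2-4$, $b^2-4$, $c^2-4$, combined with the sign equivalence on $\mathcal{A}$ (which furnishes three further representatives $(\varepsilon_1 a,\varepsilon_2 b,\varepsilon_3 c)$ with $\varepsilon_1\varepsilon_2\varepsilon_3=+1$ to try). Once the trace realization is established, the remaining checks—nontriviality of $f$, the relation $\bar{X}_1\bar{X}_2\bar{X}_3=I_2$, the order conditions $\bar{X}_i^{k_i}=I_2$, and $\mathcal{F}(f)=(a,b,c)$—are immediate from the construction.
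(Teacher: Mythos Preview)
Your reduction is exactly the paper's: pick a representative $(a,b,c)\in\widetilde{\mathcal A}$, realize it as $(\mathrm{Tr}\,X_1,\mathrm{Tr}\,X_2,\mathrm{Tr}\,X_1X_2)$ with $X_1,X_2\in\SL_2(\F_p)$, and then use Lemma~\ref{aab} to check that $f(x_i)\coloneqq \bar X_i$ defines a nontrivial homomorphism $\Gamma\to G$ with $\mathcal F(f)=(a,b,c)$. The difference is only in how the trace realization is obtained. The paper does not argue this at all: it simply invokes \cite[Theorem~1]{Mac}, which states that \emph{every} triple $(a,b,c)\in\F_p^3$ arises as $(\mathrm{Tr}\,X,\mathrm{Tr}\,Y,\mathrm{Tr}\,XY)$ for some $X,Y\in\SL_2(\F_p)$. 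Your attempt to prove this by hand is more self-contained, and the computation leading to the discriminant $D(\alpha)=(a^2-4)\alpha^2+(4b-2ac)\alpha+(c^2-4)$ is correct.

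Where your argument is genuinely incomplete is precisely the step you flag. The ``split'' case is fine: if, say, $a^2-4$ is a nonzero square you can take $X_1$ diagonal and the remaining system is linear, exactly as in the paper's proof of Lemma~\ref{numc}(I). But in the ``all non-split'' case your elliptic normal form does \emph{not} linearize the problem: writing $X_1=\sigma_B(\cdot)$ and imposing $\mathrm{Tr}\,X_2=b$, $\mathrm{Tr}\,X_1X_2=c$, $\det X_2=1$ still leaves a quadratic in one entry whose discriminant is again a non-degenerate quadratic with non-square leading coefficient. So ``tractable by the same strategy'' is not justified as written. If you want to avoid citing \cite{Mac}, the cleanest way to close the gap is to stay with your original ansatz and observe that for $a^2-4\neq 0$ the affine curve $u^2=D(\alpha)$ is a conic over $\F_p$; equivalently, the standard character-sum identity $\sum_{\alpha\in\F_p}\chi\bigl(D(\alpha)\bigr)=-\chi(a^2-4)$ (for $\chi$ the Legendre symbol and $D$ non-degenerate) gives
\[
\#\{\alpha:\ D(\alpha)\text{ is a square in }\F_p\}\ \ge\ \frac{p-1}{2}\ >\ 0,
\]
so some $\alpha$ works without any case split on the quadratic characters of $b^2-4$ and $c^2-4$. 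The degenerate case $a\in\{\pm 2\}$ (which, as you note, forces $k_1=p$) makes $D$ linear and is immediate. One small additional check you should record: once $X_1,X_2$ are produced, Lemma~\ref{aab} requires $\bar X_i\neq I_2$ for each $i$, not just $i=1$; this follows because $b,c\neq\pm 2$ unless the corresponding $k_i=p$, and in that unipotent case the explicit solution visibly has $\bar X_i\neq I_2$.
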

\begin{Lemma}\label{numc}
Let $(a,b,c) \in A$.
Then, there exist two homomorphisms $\varphi_1, \varphi_2 \in \mathcal{F}^{-1}(a,b,c)$ satisfying the following three conditions:

\noindent
\rm{(i)}
$\varphi_1$ and $\varphi_2$ are not conjugate.

\noindent
\rm{(ii)}
Every homomorphism in $\mathcal{F}^{-1}(a,b,c) \setminus \{ \varphi_1,\varphi_2\}$ is conjugate to either $\varphi_1$ or $\varphi_2$.

\noindent
\rm{(iii)}
$\varphi_i(x_1)$ is equal to some representatives in Tables \ref{TAB:rep1} and \ref{TAB:rep2} for $i \in \{ 1, 2 \}$.
In particular, if $k_1 = p$, then $\{\varphi_1(x_1), \varphi_2(x_1) \} = \{ \sigma_U(1), \sigma_U(\Delta) \}$.

In particular, the induced map $\overline{\mathcal{F}}$ is a $2$-$1$ map.
\end{Lemma}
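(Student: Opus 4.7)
The plan is to use Fricke's correspondence for pairs in $\SL_2(\F_p)$: two absolutely irreducible pairs are $\GL_2(\F_p)$-conjugate if and only if they share the triple of traces $(\mathrm{Tr}(X_1),\mathrm{Tr}(X_2),\mathrm{Tr}(X_1 X_2))$. By Lemma \ref{LEM:surjective}, I can fix some $\varphi_1\in \mathcal{F}^{-1}(a,b,c)$, and then produce a second candidate $\varphi_2:=g_0\varphi_1 g_0^{-1}$ for any $g_0\in \mathrm{PGL}_2(\F_p)\setminus \PSL_2(\F_p)$; since traces are preserved under $\mathrm{PGL}_2$-conjugation, $\mathcal{F}(\varphi_2)$ equals the class of $(a,b,c)$. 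The remaining tasks are to verify that $\varphi_1,\varphi_2$ are not $\PSL_2$-conjugate and that they exhaust the fiber.

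For non-conjugacy, the first thing to check is that the lifted pair $(X_1,X_2)\in \SL_2(\F_p)^2$ is absolutely irreducible. Otherwise $X_1,X_2$ would share an eigenvector and $\mathrm{Im}\,\varphi_1$ would sit inside a Borel subgroup, hence be solvable. But by Example \ref{sur}, any non-trivial $\varphi_1$ is surjective (since the $k_i$ are distinct odd primes and three distinct odd primes cannot all be at most $5$, so at least one exceeds $5$ and excludes the $A_5$ alternative); in particular $\mathrm{Im}\,\varphi_1 = G$ is non-solvable, and $(X_1,X_2)$ is absolutely irreducible. Schur's lemma then gives that the common centralizer of $X_1,X_2$ in $\GL_2(\F_p)$ is scalars, so the stabilizer of $(X_1,X_2)$ in $\mathrm{PGL}_2(\F_p)$ is trivial. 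If $\varphi_2$ were $\PSL_2$-conjugate to $\varphi_1$ via some $h\in \PSL_2(\F_p)$, then $h^{-1}g_0$ would stabilize the pair and hence be trivial in $\mathrm{PGL}_2(\F_p)$, forcing $g_0\in \PSL_2(\F_p)$, a contradiction.

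For completeness (property (ii)), given any $\varphi\in \mathcal{F}^{-1}(a,b,c)$, the same irreducibility argument applies, and its lifted pair has trace triple in the $\sim$-equivalence class of $(a,b,c)$. By Fricke over $\F_p$, it lies in the same $\mathrm{PGL}_2$-orbit as some sign-flipped lift of $(X_1,X_2)$; the trivial stabilizer forces this $\mathrm{PGL}_2$-orbit to split into exactly $[\mathrm{PGL}_2(\F_p):\PSL_2(\F_p)]=2$ orbits under $\PSL_2(\F_p)$, and a short check using $a,b,c\neq 0$ (Lemma \ref{ansPSL}(i)) rules out any collapse caused by pre-composing with a sign flip. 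Property (iii) is then obtained by conjugating each $\varphi_i$ within its $\PSL_2$-class to place $\varphi_i(x_1)$ onto the chosen representative in Tables \ref{TAB:rep1}--\ref{TAB:rep2}. When $k_1=p$, both $\varphi_i(x_1)$ are unipotent and lie in one of the two $\PSL_2$-classes $[\sigma_U(1)],[\sigma_U(\Delta)]$; these are interchanged precisely by conjugation from $\mathrm{PGL}_2\setminus \PSL_2$ (explicitly by $\mathrm{diag}(\Delta,1)$), so $\varphi_1(x_1)$ and $\varphi_2(x_1)=g_0\varphi_1(x_1)g_0^{-1}$ land in opposite classes, yielding $\{\varphi_1(x_1),\varphi_2(x_1)\}=\{\sigma_U(1),\sigma_U(\Delta)\}$ after further $\PSL_2$-conjugation. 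The main obstacle is the irreducibility step: while it feels intuitively clear, making it precise requires Example \ref{sur} together with the arithmetic constraint on the triple $(k_1,k_2,k_3)$.
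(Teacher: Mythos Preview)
Your argument is correct and follows a genuinely different path from the paper. The paper argues case-by-case: after conjugating so that a lift $X$ of $\varphi(x_1)$ is a fixed table representative (diagonal, unipotent, or in $B$ according to whether $k_1\mid p-1$, $k_1=p$, or $k_1\mid p+1$), it solves the trace system \eqref{aac} explicitly for the entries of a lift $Y$ of $\varphi(x_2)$, then computes by hand the action of the centralizer $C_G(\varphi(x_1))$ on $Y$ and reads off exactly two $\PSL_2$-orbits, writing down closed formulas for $\varphi_1(x_2),\varphi_2(x_2)$ in each case. Your approach replaces this computation by structure: absolute irreducibility of the lifted pair (forced by surjectivity via Example~\ref{sur}), the Fricke trace correspondence over $\F_p$ (valid because an absolutely irreducible pair generates $M_2(\F_p)$ as an algebra, so Skolem--Noether applies), and the index-$2$ inclusion $\PSL_2(\F_p)\subset\mathrm{PGL}_2(\F_p)$ together with the trivial $\mathrm{PGL}_2$-stabilizer coming from Schur's lemma. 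Your route is shorter and explains conceptually \emph{why} the answer is two; the paper's route yields explicit representatives that are not required for this lemma but are reused verbatim in its proof of Lemma~\ref{numd}. One minor remark: your clause about ``collapse caused by pre-composing with a sign flip'' is unnecessary---for each $\varphi$ in the fiber you may simply choose the lift $(X_1,X_2)$ with trace triple exactly $(a,b,c)$, after which Fricke already places the entire fiber in a single $\GL_2(\F_p)$-orbit, and the trivial stabilizer then forces exactly two $\PSL_2$-orbits with no further case analysis.
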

\begin{Lemma}\label{numd}
Take distinct $(a,b,c), (a',b',c') \in \mathcal{A}$.
Then, the elements of $\mathcal{F}^{-1}(a,b,c)$ and those of $\mathcal{F}^{-1}(a',b',c')$ are not conjugate to each other.
\end{Lemma}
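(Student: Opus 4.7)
The plan is to show that the map $\mathcal{F}:\Hom(\Gamma,G)\setminus\{e_G\}\to\mathcal{A}$ is constant on conjugacy classes; once this is established, Lemma \ref{numd} is immediate, because if $\varphi\in\mathcal{F}^{-1}(a,b,c)$ were conjugate to $\varphi'\in\mathcal{F}^{-1}(a',b',c')$ then they would have to share a common image in $\mathcal{A}$, contradicting $(a,b,c)\neq(a',b',c')$.

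The verification splits into two standard checks. First, I would confirm that $\mathcal{F}$ is genuinely well-defined as a map into the quotient $\mathcal{A}=\widetilde{\mathcal{A}}/\!\sim$. For a given $f$, the lifts $X_1,X_2\in\SL_2(\F_p)$ of $f(x_1),f(x_2)\in G$ are only determined up to sign. Replacing $X_1$ by $-X_1$ changes the triple
\[
(\mathrm{Tr}\,X_1,\ \mathrm{Tr}\,X_2,\ \mathrm{Tr}\,(X_2^{-1}X_1^{-1}))
\]
to $(-\mathrm{Tr}\,X_1,\ \mathrm{Tr}\,X_2,\ -\mathrm{Tr}\,(X_2^{-1}X_1^{-1}))$, and replacing $X_2$ by $-X_2$ flips the signs of the second and third entries. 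Both of these sign changes satisfy $\varepsilon_1\varepsilon_2\varepsilon_3=1$, so they are exactly the identifications made in $\sim$; hence the class of $\mathcal{F}(f)$ in $\mathcal{A}$ is independent of the choice of lifts.

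Second, I would show conjugation invariance. Suppose $f'=gfg^{-1}$ for some $g\in G$, and fix any lift $\tilde{g}\in\SL_2(\F_p)$ of $g$. If $X_i$ is a lift of $f(x_i)$, then $\tilde{g}X_i\tilde{g}^{-1}$ is a lift of $f'(x_i)=gf(x_i)g^{-1}$, and conjugation in $\SL_2(\F_p)$ preserves trace. Consequently
\[
\mathrm{Tr}(\tilde gX_i\tilde g^{-1})=\mathrm{Tr}\,X_i,\qquad \mathrm{Tr}\bigl((\tilde gX_2\tilde g^{-1})^{-1}(\tilde gX_1\tilde g^{-1})^{-1}\bigr)=\mathrm{Tr}\,(X_2^{-1}X_1^{-1}),
\]
so $\mathcal{F}(f')=\mathcal{F}(f)$ already in $\widetilde{\mathcal{A}}$, and a fortiori in $\mathcal{A}$.

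Combining the two observations, $\mathcal{F}$ factors through conjugacy, which is exactly the statement of Lemma \ref{numd}. There is no substantive obstacle here; the only point to handle carefully is the passage between $\SL_2(\F_p)$ and $\PSL_2(\F_p)$, which is precisely why the equivalence relation $\sim$ was introduced in the definition of $\mathcal{A}$, and which is then neutralized by the fact that conjugation is insensitive to the sign ambiguity in $\tilde{g}$.
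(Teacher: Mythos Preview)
Your argument is correct and in fact cleaner than the paper's. You observe that a conjugation $f'=gfg^{-1}$ in $G=\PSL_2(\F_p)$ lifts to a simultaneous conjugation of any chosen lifts $X_1,X_2\in\SL_2(\F_p)$ by a single $\tilde g\in\SL_2(\F_p)$, and since trace is exactly conjugation-invariant in $\SL_2(\F_p)$, the triple $(\mathrm{Tr}\,X_1,\mathrm{Tr}\,X_2,\mathrm{Tr}\,X_2^{-1}X_1^{-1})$ is unchanged already in $\widetilde{\mathcal{A}}$. The paper instead argues only with the $\F_p/\{\pm1\}$-valued trace $\overline{\mathrm{Tr}}$ on $G$ itself, which forces $\{a,-a\}=\{a',-a'\}$, etc., but does not by itself rule out the odd sign-flip $(a',b',c')\sim(a,-b,c)$, a case not identified by $\sim$; the paper then eliminates this residual case via an explicit centralizer computation in each of the three regimes $k_1\mid p-1$, $k_1=p$, $k_1\mid p+1$. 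Your lift-to-$\SL_2$ argument sidesteps that entire case analysis. The trade-off is only that the paper's computation reuses the normal forms from Lemma~\ref{numc}, so it costs little extra in context; your route is shorter and more conceptual.
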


Consequently, we can describe $\left| \, \Hom(\Gamma, \PSL_2(\F_p)) \, \right|$ by using $\mathcal{A}$.
\begin{Proposition}\label{PROP:order}
Let $k_1,k_2,$ and $k_3$ be distinct odd primes, which divide  $|G|$.
Then, 
\begin{equation}\label{EQ:PROP:order}
\left| \,\Hom(\Gamma, G) / \mathrm{conj} \, \right| = 2 \left| \, \mathcal{A} \, \right| + 1, \qquad
\left| \, \Hom(\Gamma, G) \, \right|=(p^3-p)\left|\,\mathcal{A} \, \right| + 1.
\end{equation}
\end{Proposition}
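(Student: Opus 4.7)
The plan is to combine Lemmas \ref{LEM:surjective}, \ref{numc}, \ref{numd} to count conjugacy classes, and then separately verify that every non-trivial orbit of the $G$-conjugation action on $\Hom(\Gamma,G)$ has full size $|G|$.

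For the first equality in \eqref{EQ:PROP:order}: Lemma \ref{numd} says the fibers $\mathcal{F}^{-1}(a,b,c)$ over distinct $(a,b,c) \in \mathcal{A}$ meet no common conjugacy class, so they descend to disjoint subsets of $(\Hom(\Gamma,G)\setminus\{e_G\})/\mathrm{conj}$. Surjectivity of $\mathcal{F}$ (Lemma \ref{LEM:surjective}) shows these subsets cover the whole quotient, and Lemma \ref{numc} says each fiber contributes exactly two conjugacy classes. Hence $|(\Hom(\Gamma,G)\setminus\{e_G\})/\mathrm{conj}| = 2|\mathcal{A}|$, and adding the singleton class of $e_G$ yields $2|\mathcal{A}|+1$.

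For the second equality, it suffices to show that the stabilizer $\mathrm{Stab}_G(\varphi) = C_G(\mathrm{Im}\,\varphi)$ is trivial for every non-trivial $\varphi$; granting this, each of the $2|\mathcal{A}|$ non-trivial classes has size $|G|=(p^3-p)/2$, giving the total $1+|\mathcal{A}|(p^3-p)$. Set $X_i = \varphi(x_i)$. First, all $X_i$ are non-trivial: if $X_1=I_2$ then the relation $X_1X_2X_3=I_2$ gives $X_3=X_2^{-1}$, so $X_2$ has order dividing $\gcd(k_2,k_3)=1$, forcing $\varphi$ trivial (the other two indices are symmetric). Second, $\mathrm{Im}\,\varphi$ is non-abelian: if $X_1$ and $X_2$ commuted, then $\langle X_1,X_2\rangle$ would be cyclic of order $k_1k_2$ (coprime primes), making $X_3=(X_1X_2)^{-1}$ of order $k_1k_2$, which contradicts $X_3^{k_3}=I_2$ with $k_3 \notin \{k_1k_2\}$.

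The key remaining step, and the one I expect to be the main technical hurdle, is the claim that $C_G(X_1)\cap C_G(X_2) = \{I_2\}$. I would prove this by invoking the following structural fact about $G = \PSL_2(\F_p)$: the centralizer $C_G(g)$ of every non-identity $g \in G$ is abelian, and is in fact cyclic, conjugate to exactly one of $T$, $U$, or $B$. This follows from the explicit classification of conjugacy classes reviewed in Section \ref{pslk} together with the three cases in the block-structure of elements of $\SL_2(\F_p)$ (split semisimple, non-split semisimple, unipotent) and a direct matrix-level centralizer computation. Granting this, if some $h \neq I_2$ were to lie in $C_G(X_1) \cap C_G(X_2)$, then $X_1, X_2$ would both lie in the abelian group $C_G(h)$ and therefore commute, contradicting the non-abelianness of $\mathrm{Im}\,\varphi$. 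Thus $\mathrm{Stab}_G(\varphi) = \{I_2\}$ for every non-trivial $\varphi$, which completes the proof of both equalities.
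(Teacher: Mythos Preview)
Your argument for the first equality matches the paper's exactly.

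For the second equality you take a genuinely different route from the paper. The paper invokes Example~\ref{sur}: any non-trivial $\varphi:\Gamma\to G$ is \emph{surjective} (the image is perfect, and the only proper perfect subgroup of $\PSL_2(\F_p)$ is $A_5$, which is excluded since at least one $k_i>5$). Surjectivity immediately gives $\mathrm{Stab}_G(\varphi)=Z(G)=\{I_2\}$, and the orbit count follows in one line. Your approach instead works locally with centralizers of $X_1$ and $X_2$, avoiding the classification of perfect subgroups; this is more elementary in spirit and would generalize to targets where the subgroup classification is unavailable, at the cost of a longer case analysis.

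One point in your write-up needs correcting. The assertion that $C_G(g)$ is abelian (cyclic, conjugate to $T$, $U$, or $B$) for \emph{every} non-identity $g$ is false: when $g$ has order~$2$ the centralizer is dihedral of order $p\pm 1$, not cyclic. Concretely, if $p\equiv 1\pmod 4$ and $\lambda^2=-1$, then $C_G(\sigma_T(\lambda))=T\sqcup sT$ with $s=\begin{pmatrix}0&-1\\1&0\end{pmatrix}$. Your deduction ``$X_1,X_2\in C_G(h)$ abelian $\Rightarrow$ $X_1,X_2$ commute'' therefore has a gap when $h$ has order~$2$. The fix is immediate: even in the dihedral case, every element of odd order in $C_G(h)$ lies in its index-$2$ cyclic subgroup, and since $k_1,k_2$ are odd the conclusion $[X_1,X_2]=I_2$ still follows. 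Alternatively, you can bypass $C_G(h)$ entirely by noting that $C_G(X_1)$ itself is cyclic (this \emph{is} true, since $X_1$ has odd order $k_1\neq 2$), so $h\in C_G(X_1)$ forces $\langle h\rangle\subseteq C_G(X_1)$ and hence $C_G(X_1)\subseteq C_G(h)$; combined with the analogous inclusion for $X_2$ and the dihedral/cyclic structure of $C_G(h)$ you again get the contradiction. With this patch your proof is complete.
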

\begin{Remark}
\cite{Mar} has computed the cardinality of $\Hom(\Gamma, G)$ for more general primes $p, k_1, k_2,$ and $k_3$, using the Frobenius formula.
However, the conjugacy classes of $\Hom(\Gamma, G)$ were not concretely given.
In this work, in order to compute the $\alpha$-KDW invariants, we provide the conjugacy classes of $\Hom(\Gamma, G)$ concretely.
\end{Remark}

\section{Proof of Theorems \ref{withoutp} and \ref{withp}}\label{SEC:proofmainthm}
We now prove Theorems \ref{withoutp} and \ref{withp}.
Throughout this section, let $L_i$ be $L^1 (k_i;\ell_i)$ for $i\in\{ 1,2,3 \}$.
Fix $(a,b,c) \in \mathcal{A}$, and take $\varphi_1,\varphi_2 \in \mathcal{F}^{-1} (a,b,c)$ satisfying the three conditions in Lemma \ref{numc}. 

We examine $\psi_G \circ (B\varphi_j \circ\iota_{\pi_1(M)})_* ([\Sigma]_s)$.
For $j\in\{ 1,2 \}$, we define a homomorphism
\[
\widetilde{\varphi}_{ji} : \pi_1(L_i) \longrightarrow G: \qquad \gamma_i \longmapsto \varphi_j (x_i).
\]
Here, we denote by $\gamma_i \in \pi_1(L_i)$ the canonical generator of $\pi_1(L_i)$ in Example \ref{oo122}.
Then, the bordism $W$ between $\Sigma$ and $\mathcal{L}$ in Section \ref{tSm} implies 
\begin{eqnarray}
\psi_G \circ (B\varphi_j \circ\iota_{\pi_1(M)})_* ([\Sigma]_s) &=& \sum_{i=1}^{3} \psi_G \circ (B\widetilde{\varphi}_{ji} \circ\iota_{\pi_1(L_i)})_* ([L_i]_s) \in K_1(BG).
\end{eqnarray}
As in Section \ref{PSLlens}, consider the homomorphisms $\varphi_{ji}: \pi_1 (L_i) \to H_{k_i}$ satisfying a commutative diagram
\[\raisebox{-0.5\height}{\includegraphics[scale=1]{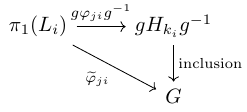}}\]
for some $g\in G$.
By an argument similar to that in Section \ref{PSLlens}, we have
\[
\psi_G \circ (B\widetilde{\varphi}_{ji} \circ\iota_{\pi_1(L_i)})_* ([L_i]_s)
=
\overline{\mathrm{Ind}}^G_{H_{k_i}} ( [(\varphi_{ji})_! \alpha^{\mathrm{rp}}_{\Z} (\Z/k_i, S^3)] ).
\]
Let us define 
\[
\alpha_j^{(a,b,c)} \coloneqq 
( [(\varphi_{j1})_! \alpha^{\mathrm{rp}}_{\Z} (\Z/k_1, S^3)],\,\,\,
[(\varphi_{j2})_! \alpha^{\mathrm{rp}}_{\Z} (\Z/k_2, S^3)],\,\,\,
[(\varphi_{j3})_! \alpha^{\mathrm{rp}}_{\Z} (\Z/k_3, S^3)] \,\,\,).
\]
Then, the equality
\[
\psi_G \circ (B\varphi_{j}\circ\iota_{\pi_1(M)})_* ([M]_s) 
= 
\bigoplus_{i=1}^3 \mathrm{\overline{Ind}}^G_{H_{k_i}} (\alpha_j^{(a,b,c)})
\]
holds for any $(a,b,c) \in \mathcal{A}$.

Thus, $\text{$\alpha$-KDW}_G (\Sigma)$ is the image of 
\begin{equation}\label{EQ:imageofalphaKDW}
\boldsymbol{o} + |G| \sum_{(a,b,c) \in \mathcal{A} } 1_\Z (\alpha_1^{(a,b,c)}) + 1_\Z(\alpha_2^{(a,b,c)}) \in \Z [\bigoplus_{i=1}^3R(H_{k_i})/\mathcal{K}_{k_i}\otimes \Z_{(2)}/\Z ],
\end{equation}
under the homomorphism 
\[
\bigoplus_{i=1}^3 \mathrm{\overline{Ind}}^G_{H_{k_i}} : 
\Z [\bigoplus_{i=1}^3R(H_{k_i})/\mathcal{K}_{k_i}\otimes \Z_{(2)}/\Z ]
\longrightarrow
\Z \left[R(G)\otimes \Z_{(2)}/\Z \right].
\]

Hereafter, 
for an odd prime $k \neq p$ dividing $p^2-1$ and $h \in \Z/k \setminus \{ 0 \}$, 
we also define a homomorphism $f^{(h)}: \pi_1( L_i ) \to H_{k_i}$ by
\[
f^{(h)}(\gamma_i) = 
\left\{\begin{split}
\sigma_{T}(\zeta_-^{h\frac{p-1}{2 k_i}}), &\text{ if $k_i|(p-1)$, } \\
\sigma_{B}(\zeta_+^{h\frac{p+1}{2 k_i}}), &\text{ if $k_i|(p+1)$, }
\end{split}
\right.
\]
as in Example \ref{oo122}.

\subsection{Case with $p \notin \{ k_1,k_2,k_3 \}$}
Let us prove Theorem \ref{withoutp}.
Suppose $p \notin \{ k_1,k_2,k_3 \}$.
Let $[\! [ k_i ]\! ]$ be $\{ 1, 2, \ldots , (k_i-1)/2\}$.
Define a bijective map $\mathcal{I}_i: [\! [ k_i ]\! ] \to (H_{k_i} \setminus \{ I_2 \})/ \mathrm{conj}$ by 
\[
\mathcal{I}_i(m)
=
\left\{
\begin{split}
\sigma_T (\zeta_-^{m \frac{p-1}{2 k_i}}), \text{ if $k_i | (p-1)$,} \\
\sigma_B (\zeta_+^{m \frac{p+1}{2 k_i}}),\text{ if $k_i | (p+1)$,}
\end{split}
\right.
\]
and, a map  $\mathcal{J}: (\Hom(\Gamma,G) \setminus \{ e_G\} )/\mathrm{conj} \to \prod_{i=1}^{3} (H_{k_i} \setminus \{ I_2 \})/ \mathrm{conj}$ by
$\mathcal{J}(\varphi_j) = (\varphi_{j1}(1),\varphi_{j2}(1),\varphi_{j3}(1))$.
Owing to the equation $\Hom(\Gamma,G) \setminus \{ e_G\} = \bigsqcup_{(a,b,c) \in \mathcal{A} }\mathcal{F}^{-1} (a,b,c)$ and Lemma \ref{numc}, we can define $\mathcal{J}$ for any element in $(\Hom(\Gamma,G) \setminus \{ e_G\} )/\mathrm{conj}$.
Then, the following commutative diagram holds:
\[\raisebox{-0.5\height}{\includegraphics[scale=1]{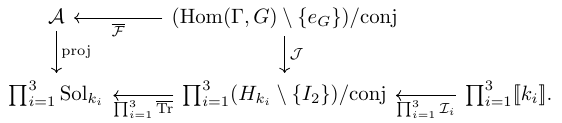}}\]
Lemma \ref{ansPSL} implies that $\prod_{i=1}^3 \overline{\mathrm{Tr}}$ is a bijection.
Let $\mathcal{P} = (\prod_{i=1}^3 \mathcal{I}_i)^{-1} \circ (\prod_{i=1}^3 \overline{\mathrm{Tr}})^{-1} \circ \mathrm{proj}$.
Then, $\mathcal{P}$ is a $2$-$1$ surjective map such that $\mathcal{P} (a,b,c) = \mathcal{P} (a,b,-c)$.
If $\mathcal{J}(\varphi_j) = (\mathcal{I}_1 \times \mathcal{I}_2 \times \mathcal{I}_3)(m_1,m_2,m_3)$, then we have $\varphi_{ji}$ and $f^{(m_i)}$ are conjugate under $N_G (H_{k_i})$.
Hence, if $\overline{\mathcal{F}}(\varphi_j)=(a,b,c)$ and $\mathcal{P}(a,b,c)=(m_1,m_2,m_3)$, then
\begin{align*}
\alpha_j^{(a,b,c)} 
&=
(
[(\varphi_{j1})_! \alpha^{\mathrm{rp}}_{\Z} (\Z/k_1, S^3)], \,\,
[(\varphi_{j2})_! \alpha^{\mathrm{rp}}_{\Z} (\Z/k_2, S^3)], \,\,
[(\varphi_{j3})_! \alpha^{\mathrm{rp}}_{\Z} (\Z/k_3, S^3)]
) \\
&=
(
[f^{(m_1)}_! \alpha^{\mathrm{rp}}_{\Z} (\Z/k_1, S^3)], \,\,
[f^{(m_2)}_! \alpha^{\mathrm{rp}}_{\Z} (\Z/k_2, S^3)], \,\,
[f^{(m_3)}_! \alpha^{\mathrm{rp}}_{\Z} (\Z/k_3, S^3)]
) \\
&=
(
[P_{k_1}^{(m_1)} \boldsymbol{\xi}(k_1;\ell_1)], \,\,
[P_{k_2}^{(m_2)} \boldsymbol{\xi}(k_2;\ell_2)], \,\,
[P_{k_3}^{(m_3)} \boldsymbol{\xi}(k_3;\ell_3)]
).
\end{align*}
Hence, applying this result to $\alpha_j^{(a,b,c)}$ in (\ref{EQ:imageofalphaKDW}) gives (\ref{EQ:withoutp}).\qed

\subsection{Case with $p \notin \{ k_1,k_2,k_3 \}$}
Let us prove Theorem \ref{withp}.
Since $\Sigma(k_1,k_2,k_3) = \Sigma(k_2,k_3,k_1) = \Sigma(k_3,k_1,k_2)$, we may assume $k_1=p$.
Note that, if $k_1 = p$ and $(a,b,c)\in\mathcal{A}$, then $a\in\{2,-2\}$.
Consider a diagram
\begin{equation}\label{EQ:diagramA}\raisebox{-0.5\height}{\includegraphics[scale=1]{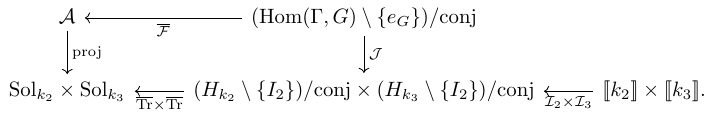}}\end{equation}
Here, the maps $\mathrm{proj}$ and $\mathcal{J}$ are defined by 
\[
\mathrm{proj}(a,b,c)=(b,c), \qquad \mathcal{J}(\varphi_{j})=(\varphi_{j2}(1),\varphi_{j3}(1)),
\]
respectively.
Then, we can easily check the commutativity of (\ref{EQ:diagramA}).
Let $\mathcal{P} = (\mathcal{I}_2\times\mathcal{I}_3)^{-1} \circ (\overline{\mathrm{Tr}}\times\overline{\mathrm{Tr}})^{-1} \circ \mathrm{proj}.$
Then, $\mathcal{P}$ is a $2$-$1$ surjective map satisfying $\mathcal{P} (2,b,c) = \mathcal{P} (-2,b,c)$.
As a result of the commutativity, if $\mathcal{P} (a,b,c) = (m_1,m_2)$, then we may take
\begin{equation}\label{EQ:alpha12}
\left\{
\begin{split}
\alpha_1^{(a,b,c)}&=
(
[P_{p}^{(1)} \boldsymbol{\xi}(p;\ell_1)],\,\,
[P_{k_2}^{(m_2)} \boldsymbol{\xi}(k_2;\ell_2)], \,\,
[P_{k_3}^{(m_3)} \boldsymbol{\xi}(k_3;\ell_3)]
),\\
\alpha_2^{(a,b,c)}&=
(
[P_{p}^{(\Delta)} \boldsymbol{\xi}(p;\ell_1)],\,\,
[P_{k_2}^{(m_2)} \boldsymbol{\xi}(k_2;\ell_2)], \,\,
[P_{k_3}^{(m_3)} \boldsymbol{\xi}(k_3;\ell_3)]
).
\end{split}
\right.
\end{equation}
Hence, applying (\ref{EQ:alpha12}) to $\alpha_j^{(a,b,c)}$ in (\ref{EQ:imageofalphaKDW}) gives (\ref{EQ:withp}).\qed

\appendix
\renewcommand{\thesection}{Appendix \Alph{section}}
\renewcommand{\thesubsection}{\Alph{section}.\arabic{subsection}}
\renewcommand{\theTheorem}{\Alph{section}.\arabic{Theorem}}
\section{Proofs of the lemmas in Section \ref{TRG}}\label{SEC:proofBrieskorn}
In this appendix, we provide the proofs of the lemmas and the proposition in Section \ref{TRG}.
Throughout this appendix, we adopt  the notation in Section \ref{SEC:Brieskorn}.
\subsection{Proof of Lemma \ref{aab}}
%\red{Suppose $x\in\SL_2(\F_p) \setminus \{ \pm I_2 \}$ and $X \in \PSL_2(\F_p)$ is the equivalence class of $x$.}
Suppose that $X \in G$ and $x \in \SL_2(\F_p)$ is a representative of $X$.
Since $-I_2 \in \SL_2(\F_p)$ is the unique matrix of order $2$,
we can verify that $X^n = I_2$ in $G$ if and only if $x^{2n} = I_2$ in $\SL_2 (\F_p)$.
Furthermore, by (\ref{OrderCheb}), we can deduce that $x^{2n}=I_2$ if and only if
\begin{equation}\label{CHECKORDER}
S_{2n}(\mathrm{Tr} x) = S_{2n-1}(\mathrm{Tr} x) +1 = 0.
\end{equation}
Since $S_{2n}(-t) = -S_{2n}(t)$ and $S_{2n-1}(-t) = S_{2n-1}(t)$, the lemma follows.\qed

\subsection{Proof of Lemma \ref{ansPSL}}
First, let us prove (i).
For $n\in\N$, one can show that $S_{2n}(0)=0$ and $S_{2n-1}(0)=(-1)^{n+1}$.
Hence, $0 \notin \mathrm{Sol}_k$ if $k$ is odd.
Furthermore, $\mathrm{Sol}_k = \overline{\mathrm{Tr}} H_k$ follows from the properties of $H_k$.

Next, let us demonstrate (ii).
From the proof of Lemma \ref{lenswithPSL}, recall $s, s'\in G$ such that $s\sigma_{T}(x)s^{-1} = \sigma_{T}(x^{-1})$ and $s'\sigma_{B}(x + \sqrt{\Delta} y)s'^{-1} = \sigma_{B}(x - \sqrt{\Delta} y)$.
Therefore, we now show the following two claims:

\noindent 
{\bf Claim 1}:
$x\in\{ y,y^{-1},-y,-y^{-1} \}$ if and only if $\overline{\mathrm{Tr}} \sigma_{T}(x) = \overline{\mathrm{Tr}} \sigma_{T}(y)$.

\noindent
{\it Proof of Claim 1}: The ``if'' part is obvious. Conversely, suppose $\overline{\mathrm{Tr}} \sigma_{T}(x) = \overline{\mathrm{Tr}} \sigma_{T}(y)$. Then, we have either $(xy -1)(x-y)=0$ or $(xy+1)(x+y) =0$. Hence, $x\in\{ y,y^{-1},-y,-y^{-1} \}$.

\noindent 
{\bf Claim 2}:
$x + \sqrt{\Delta} y \in
\{ \tilde{x} + \sqrt{\Delta} \tilde{y}, \tilde{x} - \sqrt{\Delta} \tilde{y}, -\tilde{x} + \sqrt{\Delta} \tilde{y}, -\tilde{x} - \sqrt{\Delta} \tilde{y} \}$ if and only if $\overline{\mathrm{Tr}} \sigma_{B}(x + \sqrt{\Delta} y) = \overline{\mathrm{Tr}} \sigma_{B}(\tilde{x} + \sqrt{\Delta} \tilde{y})$.

\noindent
{\it Proof of Claim 2}: The ``if'' part is obvious. 
Conversely, suppose that $\overline{\mathrm{Tr}} \sigma_{B}(x + \sqrt{\Delta} y) = \overline{\mathrm{Tr}} \sigma_{B}(\tilde{x} + \sqrt{\Delta} \tilde{y})$, i.e., $x\in \{ \pm\tilde{x} \}$.
Then, $y$, $-y$, $\tilde{y}$, and $-\tilde{y}$ are solutions of the quadratic equation $t^2 - \Delta^{-1} (1 - x^2) = 0$.
Therefore, $y\in \{ \pm\tilde{y} \}$\qed

\subsection{Proof of Lemma \ref{LEM:surjective}}
For any $(a,b,c) \in \widetilde{\mathcal{A}} \subset (\F_p)^3$, it is known from \cite[Theorem 1]{Mac} that there exist $X,Y,$ and $Z \in \SL_2(\F_p)$ such that $\mathrm{Tr} X = a,\mathrm{Tr} Y=b,\mathrm{Tr} Z=c$, and $XYZ=I_2$.
%If we define $\varphi(x_1) = [X], \varphi(x_2) = [Y],$ and $f(x_3) = [Z]$, then $\varphi : \Gamma \to G$ is a non-trivial homomorphism by Lemma \ref{aab} since $\mathcal{F}(\varphi) = (a,b,c) \in \mathcal{A}$, $\mathcal{F}$ is surjective.
We can thus define a non-trivial homomorphism $\varphi : \Gamma \to G$ by $\varphi(x_1) = X, \varphi(x_2) = Y,$ and $\varphi(x_3) = Z$.
From the definition of $\varphi$ and Lemma \ref{aab}, it follows that $\mathcal{F}(\varphi) = (a,b,c) \in \mathcal{A}$.
This implies that $\mathcal{F} : \Hom (\Gamma, G) \setminus \{ e_G \} \to \mathcal{A}$ is surjective.
\qed

\subsection{Proof of Lemma \ref{numc}}
For $\varphi \in \mathcal{F}^{-1}(a,b,c)$, let us consider $X,Y \in \SL_2(\F_p)$ as representatives of $\varphi(x_1),\varphi(x_2) \in G$ such that $\mathrm{Tr}\, X = a$ and $\mathrm{Tr} \, Y=b$, respectively.
Set
$
X
$
as
$
\begin{pmatrix}
x_{11} & x_{12} \\
x_{21} & x_{22} 
\end{pmatrix}
$
and
$
Y
$
as
$
\begin{pmatrix}
y_{11} & y_{12} \\
y_{21} & y_{22} 
\end{pmatrix}.
$
Since $\mathcal{F}(\varphi)=(a,b,c)$, it is required that $\mathrm{Tr} (Y^{-1} X^{-1})=c$, leading to
\begin{equation}\label{aac}
\left\{
\begin{array}{l}
x_{11}x_{22} - x_{12}x_{21} = 1, \\
y_{11}y_{22} - y_{12}y_{21} = 1,
\end{array}
\right.
\quad
\text{ and }
\left\{
\begin{array}{l}
x_{11} + x_{22} = a, \\
y_{11} + y_{22} = b, \\
x_{11} y_{11} + x_{12} y_{21} + x_{21} y_{12} + x_{22} y_{22} = c.
\end{array}
\right.
\end{equation}
Applying an appropriate conjugation to $\varphi$, we may assume $X$ to be one of the representatives in Tables \ref{TAB:rep1} and \ref{TAB:rep2}.
Let us consider the three cases: (I) $k_1|p-1$,\,\,(II) $k_1=p$, and (III) $k_1|p+1$.

\noindent
(I)
Choose $\lambda\in\mu_{p-1} \setminus \{ 1\}$ such that $a = \lambda + \lambda^{-1}$, and set
$X = 
\begin{pmatrix}
\lambda & 0 \\
0 & \lambda^{-1} 
\end{pmatrix}.$
Consequently, we can simplify (\ref{aac}) to
\[
\left\{
\begin{array}{l}
y_{11} = (\lambda-\lambda^{-1})^{-1} (-\lambda^{-1} b + c), \\
y_{22} = (\lambda-\lambda^{-1})^{-1} (\lambda b - c), \\
y_{12} y_{21} = y_{11} y_{22} - 1.
\end{array}
\right.
\]
Now, as $k_1$ is an odd prime, the centralizer $C_G ( f(x_1) )$ equals $T$.
Then
\[
\sigma_{T}(z) f(x_2) \sigma_{T}(z)^{-1} = 
\begin{pmatrix}
y_{11} & z^2 y_{12} \\
z^{-2}y_{21} & y_{22} 
\end{pmatrix}
\in \PSL_2(\F_p).
\]
Hence, a complete set of representatives of $\mathcal{F}^{-1} (a,b,c)$ is $\{\varphi_1,\varphi_2\}$ such that
\[
\begin{split}
\varphi_1(x_1)=\varphi_2(x_1)&=
\begin{pmatrix}
\lambda & 0 \\
0 & \lambda^{-1} 
\end{pmatrix},\\
\varphi_1(x_2)=
\begin{pmatrix}
y_{11} & 1 \\
y_{11}y_{22} - 1 & y_{22} 
\end{pmatrix}&, \qquad
\varphi_2(x_2)=
\begin{pmatrix}
y_{11} & \Delta \\
\Delta^{-1} ( y_{11}y_{22} - 1 ) & y_{22} 
\end{pmatrix},
\end{split}
\]
where $a=\lambda+\lambda^{-1}, \,\,\,y_{11} = (\lambda-\lambda^{-1})^{-1} (-\lambda^{-1} b + c)$, and $y_{22} = (\lambda-\lambda^{-1})^{-1} (\lambda b - c)$.

\noindent 
(II)
We may suppose that either
$X=\begin{pmatrix}
1 & 1 \\
0 & 1 
\end{pmatrix}$
or
$X=\begin{pmatrix}
1 & \Delta \\
0 & 1 
\end{pmatrix}$.
Let $\lambda \in \{ 1, \Delta\}$.
One can show that (\ref{aac}) is simplified to $y_{21} = \lambda^{-1} (c-b)$.
Remark that $k_2 \neq k_3$ leads to $b \neq c$.
Now, the centralizer $C_G (f(x_1))$ equals $U$.
For $z \in \F_p$, we have 
\[
\sigma_{U}(z) Y \sigma_{U}(z)^{-1} = 
\begin{pmatrix}
y_{11} - y_{21}z & y_{12} - y_{21} z^2 + y_{11}z - y_{22} z \\
y_{21} & y_{22}+y_{21}z 
\end{pmatrix}.
\]
Therefore, taking $z=y_{21}^{-1} y_{11}$, we see that each homomorphism $\Gamma \to G$ is conjugate to $f$ in the case where $(x,y_{11})$ is either $(1,0)$ or $(\Delta,0)$.
Hence, a complete set of representatives for $\mathcal{F}^{-1} (a,b,c)$ consists of $\{\varphi_1,\varphi_2\}$ such that
\[
\begin{split}
\varphi_1(x_1) = 
\begin{pmatrix}
1 & 1 \\
0 & 1 
\end{pmatrix},& \qquad
\varphi_1(x_2) =
\begin{pmatrix}
0 & -(c-b)^{-1} \\
(c-b) & b 
\end{pmatrix}, \\
\varphi_2(x_1) =
\begin{pmatrix}
1 & \Delta \\
0 & 1 
\end{pmatrix},& \qquad
\varphi_2(x_2) = 
\begin{pmatrix}
0 & -\Delta(c-b)^{-1} \\
\Delta^{-1}(c-b) & b 
\end{pmatrix}.
\end{split}
\]

\noindent
(III) The proof provided in (I) still holds if we replace $\PSL_2(\F_p)$ with $\PSL_2(\F_{p^2})$.
In addition, if $k_1$ divides $p+1$, then $X$ is diagonalizable in $\PSL_2(\F_{p^2})$.
Hence, we can prove the case (III) in a similar fashion to (I).\qed

\subsection{Proof of Lemma \ref{numd}}
Assume $(a,b,c) \neq (a',b',c') \in \mathcal{A}$.
Let $\varphi \in \mathcal{F}^{-1}(a,b,c)$ and $\varphi' \in \mathcal{F}^{-1}(a',b',c')$.
If $f$ and $g$ are conjugate, since $\overline{\mathrm{Tr}}$ is invariant under conjugation, it must be $\{ a, -a\}=\{ a', -a'\}$, $\{ b, -b\}=\{ b', -b'\}$, and $\{ c, -c\}=\{ c', -c'\}$.
This condition is equivalent to $(a',b',c')=(a,-b,c) \in \mathcal{A}$.

Therefore, it is sufficient to consider only the case where $\varphi \in \mathcal{F}^{-1}(a,b,c)$ and $\varphi' \in \mathcal{F}^{-1}(a,-b,c)$ are conjugate.
In addition, by Lemma \ref{numc}, we may assume that both $\varphi(x_1)$ and $\varphi'(x_1)\in G$ are some representatives in Tables \ref{TAB:rep1} and \ref{TAB:rep2}.
Let $X \in \SL_2 (\F_p)$ be a representative of $\varphi(x_1) = \varphi'(x_1) \in G$ such that $\mathrm{Tr}\, X = a$.
Moreover, let $Y,Y'$ be representatives of $\varphi(x_2),\varphi'(x_2) \in \PSL_2(\F_p)$, respectively, such that $\mathrm{Tr}\, Y = b$ and $\mathrm{Tr} \, Y' = -b$.
Expand
$
Y
$
as
$
\begin{pmatrix}
y_{11} & y_{12} \\
y_{21} & y_{22} 
\end{pmatrix}
$
and
$
Y'
$
as
$
\begin{pmatrix}
y'_{11} & y'_{12} \\
y'_{21} & y'_{22} 
\end{pmatrix}.
$
Let us consider the three cases: (I) $k_1|p-1$,\,\,(II) $k_1=p$, and (III) $k_1|p+1$.

\noindent
(I)
With a choice of $\lambda \in \mu_{p-1} \setminus \{1 \}$, we may assume $X \in \sigma_{T}(\lambda)$.
Since $C_G(\sigma_{T}(\lambda)) = T$, if $\varphi$ and $\varphi'$ are conjugate, then there are some $z \in \mu_{p-1}$ such that
\[
Y = \sigma_{T}(z) Y' \sigma_{T}(z)^{-1} \in\PSL_2 (\F_p).
\]
Now, considering the $(1,1)$-entries of $Y$ and $\sigma_{T}(z) Y' \sigma_{T}(z)^{-1}$, we can see that $y_{11} \in \{ y'_{11}, -y'_{11} \}$.
Meanwhile, from the proof of Lemma \ref{numc} (I), we already know 
\[
\left\{
\begin{split}
y'_{11}-y_{11} &= 2 \lambda^{-1} (\lambda-\lambda^{-1})^{-1} b &\neq 0,\\
y'_{11}+y_{11} &= 2 (\lambda-\lambda^{-1})^{-1} c &\neq 0.
\end{split}
\right.
\]
This contradicts $y_{11} \in \{ y'_{11}, -y'_{11} \}$.

\noindent
(II)
Let us take $\lambda\in\{ 1, \Delta\}$ and assume that $X \in \sigma_U(\lambda)$.
Since $C_G(\sigma_U(\lambda)) = U$,
in a similar fashion to (I), we have $y_{21} \in \{ y'_{21},-y'_{21}\} $.
However, from the proof of Lemma \ref{numc}(II), we have
\[
\left\{
\begin{split}
y'_{21}-y_{21} &= 2 \lambda^{-1} b &\neq 0,\\
y'_{21}+y_{21} &= 2 \lambda^{-1} c &\neq 0,
\end{split}
\right.
\]
leading to a contradiction to $y_{21} \in \{ y'_{21},-y'_{21}\} $.

\noindent
(III) If $k_1$ divides $p+1$, the proof follows in a similar fashion to (I).\qed

\subsection{Proof of Proposition \ref{PROP:order}}
The first equation in (\ref{EQ:PROP:order}) holds due to
\[
\Hom (\Gamma, G) = \{ 1_G \}\sqcup\bigsqcup_{(a,b,c)\in \mathcal{A}} \mathcal{F}^{-1} (a,b,c)
\]
and Lemmas \ref{numc} and \ref{numd}.
Let us now prove the second equation in (\ref{EQ:PROP:order}).
For a non-trivial map $\varphi: \Gamma \to G$, it is sufficient to show that $g \varphi g^{-1} \neq \varphi$ for any $g \in G \setminus \{ I_2 \}$.
If $\varphi$ is non-trivial, $\varphi$ is surjective by Example \ref{sur}.
Therefore, if $g \varphi(x) g^{-1} = \varphi(x)$ holds for any $x\in \Gamma$, then $g$ must be contained in the center of $G$, which is trivial.
Hence, $g=I_2$.\qed

\subsection*{Acknowledgment}
The author sincerely expresses his gratitude to Takefumi Nosaka for his constructive suggestions and continuous support.
The author is also grateful to the anonymous referee for their careful reading and helpful comments.

This work was supported by the Research Institute for Mathematical Sciences, an International Joint Usage/Research Center located in Kyoto University.

\footnotesize{Department of Mathematics, Tokyo Institute of Technology, 2-12-1 Ookayama, Meguro-ku, Tokyo 152-8551, JAPAN \\ \texttt{e-mail:yanagida.k.ab@m.titech.ac.jp}}

\end{document}